\def\E{{\mathbb{E}}}
\def\N{{\mathbb{N}}}
\def\P{{\mathbb{P}}}
\def\R{{\mathbb{R}}}
\def\T{{\mathcal{T}}}
\def\Z{{\mathbb{Z}}}
\def\PP{{\mathbb{P}}}
\newcommand{\wt}{\widetilde}
\def\8{\infty}
\def\N{\mathbb{N}}
\def\E{\mathbb{E}}
\def\P{\mathbb{P}}
\def\<{\langle}
\def\>{\rangle}
\renewcommand{\d}{\delta}
\renewcommand{\a}{\alpha}
\renewcommand{\b}{\beta}
\newcommand{\e}[1]{\E\left[#1\right]}
\newcommand{\eps}{\varepsilon}
\newcommand{\g}{\gamma}
\newcommand{\1}{\bold 1}
\newcommand{\pr}[1]{\PP\left[#1\right]}
\renewcommand{\1}[1]{{\bf 1}_{\left[#1\right]}}
\newtheorem{cor}[equation]{Corollary}
\newtheorem{corollary}[equation]{Corollary}
\newtheorem{lem}[equation]{Lemma}
\newtheorem{lemma}[equation]{Lemma}
\newtheorem{theorem}[equation]{Theorem}
\newtheorem{remark}[equation]{Remark}
\newtheorem{prop}[equation]{Proposition}
\theoremstyle{definition}
\numberwithin{equation}{section}
\begin{document}

\title[Linear stochastic equations]{Linear stochastic equations in the critical case}
\author[D. Buraczewski, K. Kolesko]{Dariusz Buraczewski and Konrad Kolesko}
\address{D. Buraczewski, K. Kolesko\\ Instytut Matematyczny\\ Uniwersytet Wroclawski\\ 50-384 Wroclaw\\
pl. Grunwaldzki 2/4\\ Poland}
\email{dbura@math.uni.wroc.pl\\ Konrad.Kolesko@math.uni.wroc.pl}

\begin{abstract}
We consider solutions of the stochastic equation
$X \stackrel{d}= \sum_{i=1}^N A_iX_i + B$, where $N$ is a random natural number, $B$ and $A_i$ are random positive numbers and $X_i$ are independent copies of $X$, which are independent also of $N,B,A_i$. Properties of solutions of this equation are mainly coded in the function $m(s)=\mathbb{E}\big[ \sum_{i=1}^N A_i^s \big]$. In this paper we study 
the critical case when the function $m$
 is tangent to the line $y=1$. Then, under a number of further assumptions, we prove existence of solutions and  describe their asymptotic behavior.
\end{abstract}

\thanks{ D. Buraczewski was partially supported by MNiSW grant N N201 393937.  K. Kolesko was partially supported by MNiSW grant N N201 610740}

\subjclass[2010]{Primary 60H25; secondary 60J80, 60F10}
\keywords{Smoothing transform, linear stochastic equation, regular variation, large deviations}

 \maketitle

  \section{Introduction}
The main purpose of the present paper is to study a class of  linear  stochastic equations, existence of their solutions and to describe properties of those solutions. The simplest example we have in mind is the random difference equation, called often also a first order random coefficients
autoregressive model,
\begin{equation}
\label{random difference}
X \stackrel{d}= AX+B,
\end{equation}
where all the random variables are real valued, $X$ is independent of the pair $(A,B)$, and the sign '$\stackrel{d}=$' denotes equality in distribution. It is well-known that the equation above has a unique solution if $\E\big[ \log A\big]<0$ and $\E\big[\log^+|B|\big]<\8$. The celebrated Kesten theorem \cite{Kesten} says that if a random variable $X$ is a solution of the equation \eqref{random difference}, then under a number of assumptions, the main being existence of a positive $\a$ such that $\E [A^\a] = 1$, the random variable $X$ is $\a$-regularly varying, i.e.
$$
\lim_{x\to \8} x^\a \P[X>x] = C_+,
$$
for some positive constant $C_+$ (see also the paper of Goldie \cite{Goldie} for a transparent and elegant proof). Since the random
difference equation appears both in many applied models e.g. in financial mathematics and in purely mathematical problems, the last result found enormous number of applications in the literature.

\medskip

In this paper we consider  general linear  stochastic equations, i.e. equations of the form
 \begin{align}
  \label{rownanie:galazkowe}
  X\stackrel d=\sum_{i=1}^NA_iX_i+B,
 \end{align}
 where  $X$, $X_i$ are i.i.d. and  independent of $(N,B,A_1,A_2\cdots)$. Notice that the  last  formula depends only on $N$ first values of $A_i$'s, therefore without any loss of generality, we assume that $A_i=0$ for $i>N$. Moreover in this paper we restrict our attention to positive random variables, i.e. we assume that $X_i$, $A_i$ and $B$ are positive.

 Equation  \eqref{rownanie:galazkowe} is also called the inhomogeneous smoothing transform and the explanation of this name is the following. Given $\mu\in\mathcal{P}(\R^+)$ we define $\wt T\mu$ as the law of $\sum_{i=1}^N A_iX_i+B$, where $X_1,X_2,\dots$ is i.i.d. sequence with distribution $\mu$, independent of the vector $(N, B,A_1,A_2,\dots)$.
 Then any fixed point of $\wt T$ is characterized as a distribution of a random variable $X$ that satisfies \eqref{rownanie:galazkowe}.

This equation has gained importance in the last few years, since it turns out to be closely related to  important objects in the computer science: the Quicksort algorithm \cite{NR,RR} (and other divide and conquer algorithms), the Pagerank algorithm \cite{VL,JO1,JO2} (being in the heart of the Google engine) and in stochastic geometry \cite{PW}. The inhomogeneous equation was recently used to describe equilibrium
distribution of a class of kinetic models see e.g. \cite{BLT}

\medskip

Equation \eqref{rownanie:galazkowe} is also a generalization of the homogeneous smoothing transform, which is defined exactly as above but without the inhomogeneous term $B$, i.e. with $B=0$ a.s. Thus, we say that $X$ is a solution (or a fixed point) of a homogeneous smoothing transform if
 \begin{align}
  \label{rownanie:galazkowe:jednorodne}
  X\stackrel{d}=\sum_{i=1}^NA_iX_i,
 \end{align}
 where $X_1,X_2,\dots$ are independent copies of $X$ and the vector $(N, A_1,A_2,..)$ is independent of the sequence $\{X_i\}$. The last equation appeared in the literature already in the eighties in connection with studying interacting particle system \cite{DL}. It turned out also to have a number of applications e.g. in branching random walks  \cite{HS}.

Existence of solutions of \eqref{rownanie:galazkowe:jednorodne} and their properties were deeply studied in \cite{DL,Liu} (see also the recent paper \cite{ABM}) and it turns out
 that their properties are encoded in the function
 \begin{align}
  \label{funkcja:teta}
   m (t)=\e{\sum_{i=1}^NA_i^t}.
 \end{align}
Notice, equation \eqref{rownanie:galazkowe:jednorodne} does not  have a  unique solution since $tX$ for $t\in \R$ also solves it as long as $X$ does.
  We summarize known results in the following Lemma
 \begin{lemma}[\cite{DL,Liu}]
 \label{dl}
 If  $\E N >1$ and $\inf_{s\in [0,1]} m(s)\le 1$, then the set of solutions of \eqref{rownanie:galazkowe:jednorodne} is nonempty. Moreover if  $\e{\left(\sum_{i=1}^NA_i\right)^{1+\d}}<\8$, $\e{N^{1+\d}}<\8$ and for some $\a\in(0,1)$ we have $m(\a)=1$, $m'(\a)\le0$ then
  \begin{align*}
   &\lim_{x\to\8}\pr{X>x}x^{\a}=c\qquad\text{ if }m'(\a)<0,\\
   &\lim_{x\to\8}\pr{X>x}x^{\a}/\log x=c\qquad\text{ if }m'(\a)=0,
  \end{align*}
for some positive constant $c$.
 \end{lemma}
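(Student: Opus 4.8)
The plan is to route through the branching random walk generated by the weights $(N,A_1,A_2,\dots)$ and to build a solution out of a martingale limit. Let $\Pi(v)$ denote the product of the $A$'s along the ancestral line of a vertex $v$ in the associated Galton--Watson tree, so that $\E\big[\sum_{|v|=n}\Pi(v)^t\big]=m(t)^n$. Since $m$ is convex where finite, $m(0)=\E N>1$, and $\inf_{s\in[0,1]}m(s)\le1$, the intermediate value theorem yields a least $\alpha\in(0,1]$ with $m(\alpha)=1$, and $m>1$ on $[0,\alpha)$ forces $m'(\alpha)\le0$. At this $\alpha$ the process $W_n:=\sum_{|v|=n}\Pi(v)^\alpha$ is a nonnegative martingale. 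If $m'(\alpha)<0$ it converges a.s.\ and in $L^1$ to a limit $W$ with $\E W=1$; if $m'(\alpha)=0$ one instead uses the derivative martingale $D_n:=-\sum_{|v|=n}\log\Pi(v)\cdot\Pi(v)^\alpha$, which converges a.s.\ to a finite, strictly positive limit $W$ (not in $L^1$, so $\E W=\8$ in this case). The required non-degeneracy is a Biggins/Kesten--Stigum/Lyons-type statement; the hypotheses $\E[(\sum_iA_i)^{1+\d}]<\8$ and $\E[N^{1+\d}]<\8$ supply the needed $L\log L$-type integrability after a H\"older bound such as $\sum_iA_i^\alpha\le N^{1-\alpha}(\sum_iA_i)^\alpha$. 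For the bare existence claim, with no moment hypotheses, one runs the Durrett--Liggett truncation argument on the same martingales. In every case the recursive structure of the tree gives the fixed-point identity $W\stackrel{d}=\sum_{i=1}^NA_i^\alpha W_i$, with $W_i$ i.i.d.\ copies of $W$ independent of $(N,A_1,\dots)$ (in the boundary case one uses in addition that the additive martingale degenerates to $0$).

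Next I would manufacture $X$ from $W$. Let $Z$ be a standard positive $\alpha$-stable random variable, $\E e^{-\lam Z}=e^{-\lam^\alpha}$, independent of everything else, and put $X:=W^{1/\alpha}Z$, so that $\phi(\lam):=\E e^{-\lam X}=\E e^{-\lam^\alpha W}$. Conditioning on the weights and inserting the fixed-point identity for $W$,
\[
\E\Big[\prod_{i=1}^N\phi(A_i\lam)\Big]=\E\Big[\exp\Big(-\lam^\alpha\sum_{i=1}^N A_i^\alpha W_i\Big)\Big]=\E e^{-\lam^\alpha W}=\phi(\lam),
\]
which is precisely the Laplace-transform form of \eqref{rownanie:galazkowe:jednorodne}; hence $X$ solves it and the solution set is nonempty.

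For the asymptotics I would read off the tail of $X$ from the behaviour of $1-\phi(\lam)=\E[1-e^{-\lam^\alpha W}]$ as $\lam\to0^+$, using a Karamata Tauberian theorem together with the monotone density theorem; here one uses $1-\phi(\lam)=\lam\int_0^\8e^{-\lam x}\P[X>x]\,dx$. When $m'(\alpha)<0$ we have $\E W<\8$, so by dominated convergence $1-\phi(\lam)\sim\E[W]\,\lam^\alpha$, which Tauberian inversion turns into $\P[X>x]\sim\frac{\E W}{\Gamma(1-\alpha)}x^{-\alpha}$. When $m'(\alpha)=0$ we have $\E W=\8$ and need the sharper input $\P[W>t]\sim c_W/t$ for the derivative-martingale limit; then $\E[1-e^{-sW}]\sim c_W\,s\log(1/s)$ as $s\to0^+$, so with $s=\lam^\alpha$ one gets $1-\phi(\lam)\sim\alpha c_W\,\lam^\alpha\log(1/\lam)$ and hence $\P[X>x]\sim\frac{\alpha c_W}{\Gamma(1-\alpha)}x^{-\alpha}\log x$, as claimed.

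The main obstacle is exactly the branching-random-walk input behind those two tail statements: proving that the additive martingale limit is non-degenerate in the regime $m'(\alpha)<0$, and that the derivative martingale limit is finite, strictly positive, and exactly regularly varying of index $1$ in the boundary regime $m'(\alpha)=0$. These are the deep parts and rely on fine first- and second-order asymptotics of branching random walks. An alternative that avoids the $W$-detour altogether is to attack the tail of $X$ directly through a Goldie-type implicit renewal analysis of the smoothing transform, controlling $x^\alpha\P[X>x]$ via the (approximate) renewal equation it satisfies; this is essentially the route taken by Liu.
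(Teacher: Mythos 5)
This lemma is not proved in the paper at all: it is stated as a summary of known results and attributed to \cite{DL,Liu}, and the closest thing to a ``paper proof'' of this type of statement is the machinery of Sections 2--3, where the analogous inhomogeneous result is obtained analytically, by viewing $D(x)=e^{\a x}(1-\phi(e^{-x}))$ as a solution of a Poisson equation and applying renewal theorems to the associated centered random walk. Your route is genuinely different: you construct a fixed point probabilistically as $X=W^{1/\a}Z$, where $W$ is the additive martingale limit (case $m'(\a)<0$) or the derivative martingale limit (case $m'(\a)=0$) and $Z$ is an independent positive $\a$-stable variable, and you then read off the tail of $X$ from the small-$\lam$ behaviour of $\E[1-e^{-\lam^\a W}]$ via a Tauberian theorem. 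The construction itself is sound (the Laplace-transform verification, the recursion $W\stackrel{d}=\sum_i A_i^\a W_i$ including the use of $W_\infty=0$ in the boundary case, and the Breiman/Tauberian step in the case $m'(\a)<0$ are all correct, and the stated moment hypotheses do yield the $L\log L$-type condition needed for Biggins' theorem). What your approach buys is a transparent probabilistic description of the fixed points; what the Durrett--Liggett/Liu approach buys is self-containedness, since it never needs fine branching-random-walk limit theory, only renewal theory for the Laplace transform --- which is exactly why this paper can adapt it to the inhomogeneous setting.

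The one genuine gap is in the boundary case: your key input $\P[W>t]\sim c_W/t$ for the derivative-martingale limit, together with its a.s.\ finiteness and positivity, is not a routine fact but a deep result that is essentially of the same strength as the conclusion $\P[X>x]\sim c\,x^{-\a}\log x$ you are trying to prove (and historically it was obtained \emph{after}, and partly by means of, results of Durrett--Liggett/Liu type, via the same renewal analysis of the Laplace transform). So as written, the boundary case of your argument black-boxes a statement that is not easier than the lemma itself; to make it a proof in the spirit of \cite{DL,Liu} you would either have to prove that tail estimate (e.g.\ by a Poisson-equation/renewal argument like the one in Section \ref{section3}), or abandon the $W$-detour and run the renewal analysis directly on $1-\phi$, as you suggest in your closing remark. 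You should also note that the lemma asserts the tail asymptotics for \emph{every} solution, so your construction of one solution must be supplemented by the Durrett--Liggett classification of all fixed points (uniqueness up to scale) to conclude in full.
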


We begin the study of the nonhomogeneous smoothing transform explaining how to construct a solution to equation \eqref{rownanie:galazkowe} (see \cite{AM, AM2} for more details).
 Let $\T=\bigcup_{k\ge0}\N^{k}$ be an infinite Ulam-Harris tree, where $\N^{0}=\{\emptyset\}$.
 For $v=(i_1,\dots,i_n)$ we define the length $|v|=n$ and by $vi$ we denote the vertex $(i_1,i_2,\dots, i_n,i)$. We write $u<v$ if $u$ is a proper prefix of $v$, i.e. $u=(i_1,..,i_k)$ for some $k<n$. Moreover we write $u\le v$ if $u<v$ or $u=v$.
 Now we take $\{(B(v),A_1(v),A_2(v),\dots)\}_{v\in\T}$ a family of i.i.d. copies of $(B,A_1,A_2,\dots)$ indexed by the vertices of $\T$. For $v\in \T$ we also define a random variable $L(\emptyset)=1$ and $L(vi)=L(v)A_i(v)$. We can define now
 \begin{align}
  \label{rozwiazanie:R}
  R=\sum_{v\in\T}L(v)B(v).
 \end{align}
 One can easily   check that if the  series above is  finite almost surely then the random variable $R$ satisfies \eqref{rownanie:galazkowe}.
 However also the converse is true. Alsmeyer and Meiners \cite{AM} proved that existence of solutions of \eqref{rownanie:galazkowe} is equivalent to finiteness of the series \eqref{rozwiazanie:R}. Knowing that there exists one solution, one can construct a whole family of solutions just adding to $R$ any $Y$ being a solution of \eqref{rownanie:galazkowe:jednorodne}. However $R$ is distinguished by the property that it is the minimal solution (in the sense of stochastic domination i.e. $\pr{R>t}\le\pr{X>t}$ for any other solution $X$), see \cite{AM} for more details. Another useful property is that $R$ is the unique solution that is measurable with respect to the input data ${(B(v),A_1(v),A_2(v),\dots)}_{v\in\T}$ (compare with \cite{Aldous_Ban} where it is called an endogeneous solution). From now we  call $R$ the minimal solution.
 Therefore, if we can describe the tail of $R$, in view Lemma \ref{dl}, we obtain a full description of tails of all solutions \eqref{rownanie:galazkowe}.

 Similarly like in the homogeneous case the fundamental role in description of solutions of \eqref{rownanie:galazkowe} plays the function $m$ defined in \eqref{funkcja:teta}.  The necessary condition ensuring finiteness of  \eqref{rozwiazanie:R} is that $ m (t_0)\le1$ for some $t_0\in[0,1]$. However sufficient conditions are  still not established. It is known \cite{AM, JO1} that if $m(s)<1$ for some $s\in (0,1)$ and $\E B^s<\8$ then $R$ is well defined.

 Jelenkovi{\'c} and Olvera-Cravioto \cite{JO1,JO2} proved that $R$  has a power law distribution:
 \begin{lemma}
  \label{thm:JO}
  Let $(B,A_1,A_2,\dots)$ be a nonnegative random vector, with $N\in\N\cup\{\8\}$, $\pr{B>0}>0$ and $R$ be the minimal solution to \eqref{rownanie:galazkowe} given by \eqref{rozwiazanie:R}. Suppose that
  \begin{itemize}
  \item
  the equation $m(s)=1$ has 2 solutions $\a<\b$;
   \item $\e{B^{\b}}<\8$, $0<m'(\b)=\e{\sum_{i=1}^NA_i^{\b}\log A_i}<\8$.
   \item there exists $j\ge1$ with $\pr{N\ge j, A_j>0}>0$ such that the measure $\pr{\log A_j\in du, A_j>0, N\ge j }$ is nonarithmetic;
   \end{itemize}
   In addition, assume that
  \begin{itemize}
   \item[a)] $m(1)=\e{\sum_{i=1}^NA_i}<1$ and $\e{\left(\sum_{i=1}^NA_i\right)^{\b}}<\8$, if $\b>1$;
   \end{itemize}
   or
   \begin{itemize}
   \item[b)] $\e{\left(\sum_{i=1}^NA_i^{\b/(1+\eps)}\right)^{1+\eps}}<0$ for some $0<\eps<1$, if $0<\b\le1$.
  \end{itemize}
  Then,
  $$\pr{R>t}\sim Ct^{-\b},\qquad t\to\8,$$
  for some $C>0$.

 \end{lemma}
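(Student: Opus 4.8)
The plan is to run the tree-indexed version of Goldie's implicit renewal theory (following Jelenkovi\'{c} and Olvera-Cravioto) on the series representation $R=\sum_{v\in\T}L(v)B(v)$ of the minimal solution.

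\emph{Step~1 (moments of $R$).} First I would show that $\e{R^{s}}<\8$ for every $s\in[0,\b)$. Since $m$ is convex and $m(\a)=m(\b)=1$, we have $m(s)<1$ on $(\a,\b)$, hence $\sum_{n\ge0}m(s)^{n}<\8$; together with $\e{B^{\b}}<\8$ this controls $\e{R^{s}}$ through the representation \eqref{rozwiazanie:R}. This is exactly where the dichotomy in the statement enters: for $\b\le1$ one uses the subadditivity $(\sum_iA_ix_i)^{s}\le\sum_iA_i^{s}x_i^{s}$ together with the moment hypothesis~(b), while for $\b>1$ one uses Minkowski's inequality together with $m(1)<1$ and $\e{(\sum_iA_i)^{\b}}<\8$ from~(a) to absorb the cross terms.

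\emph{Step~2 (renewal equation).} Because $m(\b)=1$, the measure $\mu(dx):=\e{\sum_{i=1}^{N}A_i^{\b}\1{\log A_i\in dx}}$ is a probability measure on $\R$; it is nonarithmetic by the third hypothesis and has mean $\int_{\R}x\,\mu(dx)=m'(\b)\in(0,\8)$. Conditioning \eqref{rownanie:galazkowe} on the data attached to the root of $\T$, so that $R\stackrel d=\sum_{i=1}^{N}A_iR_i+B$ with $R_1,R_2,\dots$ i.i.d.\ copies of $R$ independent of $(N,B,A_1,A_2,\dots)$, and writing $t=e^{u}$, one verifies that $r(u):=e^{\b u}\pr{R>e^{u}}$ satisfies the renewal equation $r=r\ast\mu+\d$, where
\[
\d(u)=e^{\b u}\,\e{\1{\sum_{i=1}^{N}A_iR_i+B>e^{u}}-\sum_{i=1}^{N}\1{A_iR_i>e^{u}}}.
\]
Indeed $e^{\b u}\e{\sum_{i=1}^{N}\1{A_iR_i>e^{u}}}=\e{\sum_{i=1}^{N}A_i^{\b}r(u-\log A_i)}=(r\ast\mu)(u)$, while $r(u)-(r\ast\mu)(u)=\d(u)$ by the choice of $\d$.

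\emph{Step~3 (direct Riemann integrability of $\d$ --- the main obstacle).} The crux is to show that $\d$ is directly Riemann integrable with $\int_{\R}\d(u)\,du\in(0,\8)$. Integrability is, after the substitution $t=e^{u}$, the estimate
\[
\int_{0}^{\8}t^{\b-1}\Big|\pr{\sum_iA_iR_i+B>t}-\e{\sum_i\1{A_iR_i>t}}\Big|\,dt<\8 ,
\]
in which the contribution of $B$ is handled by $\e{B^{\b}}<\8$, and the remaining part is a single-big-jump estimate: the difference between the tail of $\sum_iA_iR_i$ and the expected number of large summands must be shown to be $o(t^{-\b})$ after integration, by truncating $\sum_iA_iR_i$ according to which terms are comparable to $t$ and invoking the moments from Step~1 together with the auxiliary bounds in~(a)/(b). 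Passing from mere integrability to direct Riemann integrability needs, in addition, a uniform local oscillation bound on $\d$. Once this is in place, Goldie's renewal lemma for nonarithmetic $\mu$ of positive mean yields $r(u)\to\frac{1}{m'(\b)}\int_{\R}\d(u)\,du=:C$, that is $\pr{R>t}\sim Ct^{-\b}$; the strict positivity $C>0$ is argued separately and uses $\pr{B>0}>0$, which prevents $\d$ from being degenerate.

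I expect Step~3 to carry essentially all the weight. It is worth noting why a crude union bound over $\T$ is not enough: each term $\pr{L(v)B(v)>t}$ is $o(t^{-\b})$ because $\e{B^{\b}}<\8$, yet $\sum_{n\ge0}\e{\sum_{|v|=n}L(v)^{\b}}=\sum_{n\ge0}m(\b)^{n}=\8$, so the finiteness and positivity of $C$ come only from the exact renewal-theoretic cancellation, not from summing tails.
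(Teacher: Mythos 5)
This lemma is not proved in the paper at all: it is quoted from Jelenkovi\'c and Olvera--Cravioto \cite{JO1,JO2}, and the paper's own remark immediately after it points out that the positivity of $C$ was not settled there and was only proved later in \cite{BDZ}. Your outline reproduces the strategy of the cited source (implicit renewal theory on the weighted branching tree: moments of $R$ up to $\b$, the renewal equation for $r(u)=e^{\b u}\pr{R>e^{u}}$ driven by the probability measure $\mu(dx)=\e{\sum_{i=1}^{N}A_i^{\b}\1{\log A_i\in dx}}$ with mean $m'(\b)$, then a Goldie-type key renewal argument), so the route is the right one; Steps 1 and 2 are essentially correct as sketched, up to the usual care with cross terms in the $\b>1$ moment bound.

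The genuine gaps are exactly where you place the weight. First, Step 3 is the entire content of the implicit renewal theorem and you only assert it: the equation $r=r\ast\mu+\d$ cannot be fed to the key renewal theorem as it stands, because neither local boundedness of $r$ nor direct Riemann integrability of $\d$ is available directly. Goldie's method requires smoothing both sides (convolving with a suitable kernel), proving integrability and dRi of the smoothed remainder -- which is precisely the single-big-jump estimate $\int_0^{\8}t^{\b-1}\bigl|\pr{\sum_i A_iR_i+B>t}-\e{\sum_i\1{A_iR_i>t}}\bigr|\,dt<\8$ that consumes hypotheses (a)/(b) and the moments of Step 1 -- and then a de-smoothing lemma to convert the smoothed asymptotics back into $\pr{R>t}\sim Ct^{-\b}$. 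None of this is carried out, and it is the hard part of \cite{JO1,JO2}. Second, positivity of $C$ is not a soft consequence of $\pr{B>0}>0$: the renewal representation gives $C=\frac{1}{m'(\b)}\int_{\R}\d(u)\,du$ with an integrand that is not of one sign, so a priori the constant could vanish; as the paper notes, this point required the separate later argument of \cite{BDZ} (and, in the critical setting of the paper itself, an entirely different complex-analytic argument in Section \ref{subsection3.4}). So, as a proof, the proposal names the correct machinery but leaves both decisive steps -- direct Riemann integrability via smoothing/de-smoothing, and strict positivity of $C$ -- unproved.
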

 \begin{remark}
  Positivity of the constant $C$ was not discussed in \cite{JO1,JO2} and was proved recently in \cite{BDZ}.
 \end{remark}

 Summarizing  if  $\a<\b$ are two solutions of the equation $m(s)=1$ and $\a<1$, then  the minimal solution $R$ of \eqref{rownanie:galazkowe} has a power law of index $\b$. Any other solution $X$ of \eqref{rownanie:galazkowe} satisfies $\pr{X>t}\sim Ct^{-\a}$.

\medskip

The main purpose of the present paper is to  complete the picture and to  study the critical case, when the equation $m(s)=1$ has exactly one solution $\a < 1$ and then $m'(\a)=0$, i.e. when the graph of the function $m(s)$ is tangent to the line $y=1$. For the random difference equation \eqref{random difference} this corresponds to the situation when the graph of the Mellin transform $\E [A^s]$ is tangent to the line $y=1$ at 0, i.e. when $\E[\log A]=0$. Then it is known that equation \eqref{random difference} has no solutions, nevertheless when written in terms of measures has solutions in the class of Radon measures on $\R$. Existence and asymptotic properties of solutions were studied in \cite{BBE, BBD, Bura}. For the homogeneous smoothing transform the critical case was considered by Durrett, Liggett \cite{DL} and Liu \cite{Liu} and is a part of Lemma \ref{dl} (see also \cite{bur-spa} for the case $\a=1$).

The only result we know concerning the inhomogeneous smoothing transform in the critical case is due to Alsmeyer and Meiners \cite{AM}, who proved that for $\a<1/5$ (and under some further assumptions) the series \eqref{rozwiazanie:R} is finite, providing thus a solution to \eqref{rownanie:galazkowe}.

The main result of this paper is the following
 \begin{theorem}
 \label{thm:twierdzenie.asymptotyka}
  Suppose that
 \begin{align}
  \label{istnieje:alpha}
  &\mbox{There exists }0<\a<1\mbox{ such that }m(\a)=\e{\sum_{i=1}^NA_i^{\a}}=1,\\
  \label{stycznosc:m}
  &m'(\a)=\e{\sum_{i=1}^NA_i^{\a}\log A_i}=0,\\
  & \E [N] >1,\\
  \label{zalozenie:nieartmetycznosc}
  &\mbox{For some }j\mbox{ the measure }\pr{\log A_j\in du, A_j>0, N\ge j }\mbox{ is nonarithmetic},\\
  \label{zalozenia.momenty}
  &\e{N^{1+\d}+B^{\a+\d}+\sum_{i=1}^N\left(A_i^{-\d}+A_i^{\a+\d}\right)}<\8,\mbox{ for some }0<\d<1-\a.
 \end{align}
Then the minimal solution $R$ of \eqref{rownanie:galazkowe} is well defined and moreover
  \begin{equation}
   \lim_{t\to \8} t^{\a}\pr{R>t}=C_+
  \end{equation}
and the constant $C_+$ is strictly positive.
 \end{theorem}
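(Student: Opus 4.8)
The plan is to analyze the tail of the minimal solution $R=\sum_{v\in\T}L(v)B(v)$ through its multiplicative structure over the weighted branching tree, using the critical branching random walk $-\log L(v)$ whose step distribution has Laplace transform tangent to $1$ at $\a$. First I would perform the standard many-to-one change of measure: introduce the $\a$-tilted probability $\widehat\P$ under which the process $S(v):=-\log L(v)$ becomes (after the usual spine decomposition) a centered random walk, centered precisely because \eqref{stycznosc:m} gives $m'(\a)=0$. Then
$$
\E\big[R^{\a}\wedge t^{\a}\big]\ \text{and}\ \P[R>t]
$$
can be compared to sums over the tree of contributions $L(v)^{\a}$ weighted by the ``displacement'' of $B(v)$, and the moment assumptions in \eqref{zalozenia.momenty} (the $A_i^{-\d}$ and $A_i^{\a+\d}$ terms, together with $N^{1+\d}$ and $B^{\a+\d}$) are exactly what is needed to control the spread of the $S(v)$'s and to apply spine/change-of-measure estimates with the requisite uniform integrability. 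The nonarithmeticity hypothesis \eqref{zalozenie:nieartmetycznosc} is the input for a renewal-theoretic argument on the spine.

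The heart of the argument should be a renewal equation for the truncated Laplace-type functional of $R$. Concretely, writing the fixed-point identity $R\stackrel d=\sum_{i=1}^N A_i R_i + B$ and iterating, one obtains for the function
$$
\phi(t)\ :=\ t^{\a}\,\E\!\left[1-e^{-R/t}\right]\qquad\text{(or a similar smoothed tail functional)}
$$
an approximate renewal equation of the form $\phi = \phi\ast$(step law of the tilted walk)$\,+\,g$, where $g$ is a directly-Riemann-integrable remainder built from $B$ and from the difference between the true increments and their tilted counterparts. Since the tilted walk is centered with finite variance — this is where one needs $m$ to be genuinely $C^2$-like near $\a$, guaranteed by the two-sided moment bounds on $A_i$ — the relevant renewal measure is that of a centered (hence null-recurrent) random walk, so I expect the correct normalization to come from the $\sqrt{n}$ (rather than $n$) growth of the Green's function; this is precisely the mechanism that turns the logarithmic correction of the boundary case $m'(\a)=0$ in Lemma \ref{dl} into an honest power law $t^{-\a}$ for the inhomogeneous equation, because the extra summation over the tree contributes the missing factor. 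The Key Renewal Theorem (in its centered-walk / Kesten-Spitzer form, via the nonarithmetic hypothesis) then yields $\phi(t)\to C_+$, and a Tauberian step upgrades this to $t^{\a}\P[R>t]\to C_+$.

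For well-definedness of $R$ I would argue separately and first: under \eqref{zalozenia.momenty}, a first-moment computation of $\sum_{|v|=n}L(v)^{\a}B(v)$ along the spine, combined with the fact that the tilted walk is a centered random walk with good moments, shows $\sum_v L(v)^{\a}\E[B(v)^{\a}\wedge 1]<\infty$ (the $A_i^{-\d}$ moment prevents the walk from drifting to $-\infty$ too fast and controls the number of vertices at a given level), hence $R<\infty$ a.s.; this replaces and strengthens the $\a<1/5$ restriction of Alsmeyer–Meiners. Positivity of $C_+$ I would get from the probabilistic representation of the constant as an expectation over the spine that is manifestly nonzero once $\P[B>0]>0$ and $\E[N]>1$, using an explicit lower bound (the contribution of a single vertex with large $B$), much as in \cite{BDZ}.

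The main obstacle, I expect, is establishing the renewal equation with a \emph{directly Riemann integrable} remainder and rigorously controlling the truncation: one must show that the error terms coming from (i) replacing $1-e^{-R/t}$ by an additive functional of the increments, (ii) the centered (null-recurrent) nature of the walk, which makes naive $L^1$ bounds on the renewal kernel diverge, and (iii) the fluctuations of $N$ and of $\sum A_i^{\a}$ around its mean $1$, all assemble into something integrable against the Green's function of a centered walk. This is delicate precisely because in the critical case the natural martingale $\sum_{|v|=n}L(v)^{\a}$ is not uniformly integrable (it converges to $0$), so all estimates must be done with the derivative martingale / spine truncation machinery rather than with the additive martingale directly.
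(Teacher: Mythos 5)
Your overall architecture (an $\a$-tilt producing a centered walk because $m'(\a)=0$, a Poisson/renewal-type equation for the functional $t^{\a}\E[1-e^{-tR}]$, a directly Riemann integrable remainder, the renewal theorem plus a Tauberian step) is indeed the skeleton of the paper's proof, but two of your central steps fail as stated. First, finiteness of $R$: a ``first-moment computation along the spine'' cannot give $\sum_{v\in\T}L(v)^{\a}\,\e{B^{\a}\wedge 1}<\8$, because in the critical case $\e{\sum_{|v|=n}L(v)^{\a}}=m(\a)^{n}=1$ for every $n$, so the expectation of that sum over the whole tree diverges. The paper instead proves the a priori bound $\pr{R>t}\le Ct^{-\a}$ (Proposition \ref{prop:existence}) by splitting on the event $\{\max_v L(v)\le t\}$, applying the many-to-one formula with the barrier $L(v')\le t$ along the ancestral line, and bounding the resulting Green-function quantity $W(\log t)$ via ladder epochs (Lemma \ref{lem:1}). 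Second, your renewal step: for a centered step law the key renewal theorem applied to the Poisson equation for $D(x)=e^{\a x}(1-\phi(e^{-x}))$ a priori yields only $D(x)/x\to 2\int_{\R}G(x)dx/\sigma^{2}$, i.e.\ exactly the $\log t\; t^{-\a}$ behaviour of the homogeneous critical case; your heuristic that ``the extra summation over the tree contributes the missing factor'' is not the actual mechanism and does not remove this linear growth. What kills the logarithm is a two-stage argument: the a priori bound $\pr{R>t}\le Ct^{-\a}$ forces $\int_{\R}G(x)dx=0$, and only then a second renewal argument, run on the integrated remainder $\overline G(x)=\int_{-\8}^{x}G(y)dy$, produces $\lim_{x\to\8}D(x)=-2\int_{\R}xG(x)dx/\sigma^{2}$. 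Without identifying this cancellation you have no route to a finite limit; moreover, to justify the optional-stopping/renewal identities one first needs $D(x+y)/D(x)\to1$, which the paper establishes by a separate Durrett--Liggett-type compactness argument (Proposition \ref{prop: regularnosc D}), absent from your plan.

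Your treatment of positivity is also a genuine gap. The ``single vertex with large $B$'' lower bound is exactly the hard point in the critical case: the additive martingale $\sum_{|v|=n}L(v)^{\a}$ tends to $0$, second-moment arguments degrade, and the argument of \cite{BDZ} you invoke concerns the non-critical two-root situation with tail index $\b$. The paper avoids a probabilistic lower bound altogether: having shown $C_+=-2\int xG(x)dx/\sigma^{2}$ (up to the Tauberian constant), it proves $\int xG(x)dx\neq0$ by complex analysis, namely the identity $\e{R^{z}}=zH(z)/\big((m(z)-1)\Gamma(1-z)\big)$ for $0<\Re z<\a$ together with Landau's theorem: if both $\int G=0$ and $\int xG=0$, then $\e{R^{z}}$ would extend holomorphically past $\a$, giving $\e{R^{\a+\d'}}<\8$ and contradicting $\e{R^{\b}}=\8$ for $\b>\a$ (Corollary \ref{momenty:R}). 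If you insist on a probabilistic proof of $C_+>0$ you must supply a genuine lower bound of order $t^{-\a}$ for this critical branching structure; as written, the proposal leaves both the finiteness of $R$ and the positivity (indeed the very existence of a finite limit) unproven.
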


Thus in the critical case the tail of the minimal solution of \eqref{rownanie:galazkowe} is of the order $t^{-\a}$, whereas the tails of all the other solutions behaves at infinity like $\log t\ t^{-\a}$.

\medskip

We finish the introduction with an overview over the organization of the paper. In Section \ref{istnienie:galazek} we prove that $\P[R>t]\le C t^{-\a}$, that in particular implies that $R$ is finite a.s. The most essential part of the proof is contained in Section \ref{section3}. We reduce the problem to study behavior at 0 of the Laplace transform $\phi$ of $R$. Considering $\phi$ as a solution of the Poisson equation we first prove that it behaves regularly at 0 (Section \ref{subsection3.1}) and the deduce the correct asymptotic (Section \ref{subsection3.3}). Finally, applying some arguments based on the Landau theorem and holomorphic functions, we prove positivity of the limiting constant (Section \ref{subsection3.4}).

\medskip

The authors are grateful to Jacek Zienkiewicz for stimulating discussions on the subject of the paper.

 \section{Existence of a solution and its first estimates}
 \label{istnienie:galazek}
 In this section we prove 

 \begin{prop}
 \label{prop:existence}
 Assume hypotheses of Theorem \ref{thm:twierdzenie.asymptotyka} are satisfied,
  then
 $$ \P[R>t] \le C t^{-\a}, $$ where $R$ is the random variable defined in \eqref{rozwiazanie:R}.
 In particular $R$ is finite a.s.
 \end{prop}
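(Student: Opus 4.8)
The plan is to truncate the series \eqref{rozwiazanie:R} defining $R$ along a stopping line of the weighted branching structure, reduce the bound for $\p{R>t}$ to a contribution carried by the critical additive martingale and one coming from the deep part of the tree, and control the latter by means of the moment hypotheses \eqref{zalozenia.momenty} together with fine estimates for branching random walks at the boundary. Write $S(v)=-\log L(v)$. By \eqref{istnieje:alpha} and \eqref{stycznosc:m} this is a branching random walk in the boundary case: $m(\a)=\E\big[\sum_i e^{-\a S_i}\big]=1$ and, after the $\a$-shift, the displacements have mean $-m'(\a)=0$; in particular $m$ is convex with global minimum $1$ attained at $\a$, so the naive estimate $\E[R^s]\le\E[B^s]\sum_n m(s)^n$ is useless and one must extract the exponent $\a$ itself. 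For $u>0$ let $\mathcal L(u)=\{v\in\T:S(v)>u,\ S(w)\le u\text{ for all }w<v\}$ be the line of first passage above $u$ and $\B(u)$ the part of $\T$ before it; splitting $\T$ along $\mathcal L(u)$ gives, as an identity in $[0,\8]$,
\[
 R=\sum_{v\in\B(u)}L(v)B(v)+\sum_{v\in\mathcal L(u)}L(v)R^{(v)},
\]
with the $R^{(v)}$ i.i.d.\ copies of $R$ independent of the $\sigma$-field $\mathcal F_u$ generated by $\mathcal L(u)$ and the weights along it, whence for $u=\log t$
\[
 \p{R>t}\le\p{\textstyle\sum_{v\in\B(u)}L(v)B(v)>t/2}+\p{\textstyle\sum_{v\in\mathcal L(u)}L(v)R^{(v)}>t/2}.
\]

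For the second, ``boundary'', term one conditions on $\mathcal F_u$ and uses that $L(v)<e^{-u}=1/t$ on $\mathcal L(u)$: splitting each $R^{(v)}$ at level $t$, the part $R^{(v)}\mathbf 1_{\{R^{(v)}>t\}}$ is treated by a union bound over $\mathcal L(u)$ — after discarding the (few, by $\E[\sum_i A_i^{-\d}]<\8$) vertices with atypically large overshoot of $S$ above $\log t$ — and for the bounded part a Markov inequality of order $\a<1$ together with sub-additivity gives a bound $(t/2)^{-\a}\,\E[(R\wedge t)^{\a}]\,\sum_{v\in\mathcal L(u)}L(v)^{\a}$. The gain is that $\sum_{|v|=n}L(v)^{\a}$ is a mean-one martingale by \eqref{istnieje:alpha}, so that $\E\big[\sum_{v\in\mathcal L(u)}L(v)^{\a}\big]\le1$ (an optional-stopping identity requiring only $m(\a)=1$ and the integrability in \eqref{zalozenia.momenty}); combined with $\E[(R\wedge t)^{\a}]=O(\log t)$, which comes back from the very bound one is proving, this yields a bound of order $t^{-\a}\log t$, the logarithm to be removed by iterating the whole argument. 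Finiteness and smoothness of $m$ near $\a$ — needed to run these manipulations — is where $\E[N^{1+\d}]<\8$ and $\E\big[\sum_i A_i^{\a+\d}\big]<\8$ enter, forcing in particular $m''(\a)<\8$.

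The first, ``interior'', term is the main obstacle: the tree below a finite barrier is infinite on average — the barrier-constrained $\a$-shifted walk being recurrent — and $\E[R^{\a}]=\8$, so neither a moment of order $\a$ nor a cruder one of order $s<\a$ applies directly to $\sum_{v\in\B(u)}L(v)B(v)$. The way around is to pass first to a good event on which the critical martingale $\sum_{|v|=n}L(v)^{\a}$ stays below $t^{\a}$, its complement having probability $\le t^{-\a}$ by Doob's inequality, and there to exploit that, conditionally on the weights, the interior sum is a sum of independent variables with the light-tailed inputs $B(v)$: via a Kolmogorov three-series / Borel–Cantelli argument and $\E[B^{\a+\d}]<\8$, its tail is governed by the partial sums $\sum_{|v|=n}L(v)^{\a+\d}$, which, although of exponentially growing mean $m(\a+\d)^n$, are in the boundary regime typically exponentially smaller. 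Quantifying this last point is a large-deviation estimate for the branching random walk $S$, and it is precisely here that the full strength of \eqref{zalozenia.momenty} — the two-sided integrability $\E[\sum_i A_i^{-\d}]<\8$, $\E[\sum_i A_i^{\a+\d}]<\8$, $\E[N^{1+\d}]<\8$, with $\a+\d<1$ (which also supplies the sub-additivity used throughout) — is needed, to control simultaneously the left tail and overshoot of $S$ and the regularity of $m$ at $\a$. Assembling the two terms, optimizing in $u$ and carrying out the self-improvement that absorbs the logarithm then gives $\p{R>t}\le Ct^{-\a}$; in particular $\sum_n\p{R>2^n}<\8$, so $R<\8$ almost surely. (The hypothesis $\E N>1$, which makes the underlying tree — and the problem — nondegenerate, enters only through this standard set-up.)
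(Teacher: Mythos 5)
Your overall strategy (stopping line at the level where $L(v)$ first falls below $1/t$, plus a good event for the critical martingale) is reasonable in spirit, but as written the argument has genuine gaps, and the two places where it is vague are exactly where the real work lies. First, the boundary term is circular and, even granting the circle, quantitatively insufficient: the union bound over $\mathcal{L}(\log t)$ of the event $\{R^{(v)}>t\}$ costs $\E\big[\#\mathcal{L}(\log t)\big]\,\P[R>t]$, and since the expected number of vertices on the first-passage line is of order $t^{\a}$, this is $O(1)$ rather than $O(t^{-\a})$ even if one already knew $\P[R>t]\le Ct^{-\a}$; moreover both this step and the bound $\e{(R\wedge t)^{\a}}=O(\log t)$ presuppose the tail estimate being proved (indeed they presuppose $R<\8$ a.s., which is part of the conclusion), and no truncation or induction scheme is set up to make such a bootstrap legitimate. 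The claim that the spurious $\log t$ is ``removed by iterating the whole argument'' is also unsupported: feeding $t^{-\a}\log t$ back in only worsens $\e{(R\wedge t)^{\a}}$ to order $(\log t)^2$, so the iteration does not visibly converge. Second, for the interior term the decisive estimate --- that the barrier-constrained sums $\sum_{|v|=n}L(v)^{\a+\d}$ are summable in $n$ with the right power of $t$ --- is merely asserted as ``a large-deviation estimate for the branching random walk''; this is the heart of the proposition, not a routine remark, and a Kolmogorov three-series/Borel--Cantelli argument by itself gives only almost-sure statements, not a tail bound of order $t^{-\a}$.

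For comparison, the paper avoids all of this by a different and self-contained decomposition: it compares $R$ with $\wt R=\max_{v\in\T}L(v)$, proves $\P[\wt R>t]\le t^{-\a}$ by the many-to-one (change of measure) formula \eqref{eq: zamiana} applied to the first passage of the centered walk $S_n$, and on the complementary event $\{\wt R\le t\}$ applies sub-additivity with the exponent $\g=\a+\d<1$ together with $\E[B^{\g}]<\8$; the resulting expectation $\E\big[\sum_v L(v)^{\g}\1{L(v')\le t,\ v'\le v}\big]$ is computed by many-to-one to be $t^{\d}W(\log t)$, and the whole proof then rests on Lemma \ref{lem:1}, a renewal-type argument via ladder epochs and the boundedness of the potential of the transient ladder-height walk, showing $W$ is bounded. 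If you want to salvage your route, you would need (i) a genuine a priori scheme (e.g.\ finite-depth truncations $R_n$ with bounds uniform in $n$) replacing the circular use of $\P[R>t]$ and $\e{(R\wedge t)^{\a}}$, and (ii) an actual proof of the barrier estimate for the interior sum --- at which point you would essentially be reproving Lemma \ref{lem:1}.
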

 \begin{cor}
 \label{cor:moments}
 $\E[ R^\b]$ is finite  for every $\b<\a$.
 \end{cor}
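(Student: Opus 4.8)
The plan is to exploit the branching structure together with the criticality conditions \eqref{istnieje:alpha}--\eqref{stycznosc:m} through a change of measure and ballot‑type estimates for an associated centred random walk, and then to close everything by a bootstrap on the truncated solutions $R_n=\sum_{|v|\le n}L(v)B(v)$, which increase to $R$ and satisfy $R_n\stackrel{d}{=}\sum_{i=1}^NA_iR_{n-1}^{(i)}+B$. Write $S_v=-\log L(v)$. By the many‑to‑one lemma, for nonnegative $g$ one has $\E\big[\sum_{|v|=n}L(v)^\a\,g(S_w:w\le v)\big]=\widehat{\E}\big[g(S_1,\dots,S_n)\big]$, where under $\widehat{\P}$ the sequence $(S_k)$ is a random walk whose one‑step law is the $\a$‑size‑biased version of $(N,A_1,A_2,\dots)$; by \eqref{stycznosc:m} this walk is \emph{centred}, and by \eqref{zalozenia.momenty} it has exponential moments on both sides, $\widehat{\E}[e^{\mp\d S_1}]=m(\a\pm\d)<\8$. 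Two consequences of criticality will be used throughout. First, along a first‑passage line $\{v:S_v\notin(-u,u),\ S_w\in(-u,u)\ \forall w<v\}$ the overshoot of $S$ above $u$ (resp. below $-u$) has exponential moments, controlled by $\E[\sum_iA_i^{-\d}]$ (resp. $\E[\sum_iA_i^{\a+\d}]$). Second, the additive martingale $W_n=\sum_{|v|=n}L(v)^\a$ has constant mean $1$, so the total $\a$‑weight on such a first‑passage line is $1$ in expectation; since each vertex $v$ with $L(v)>t$ carries $\a$‑weight $L(v)^\a>t^\a$, the expected number of vertices whose weight exceeds $t$ is at most $t^{-\a}$.

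Fix $t$ large, put $u=\log t$, and split $R$ at the first time the log‑weight leaves the window $(-u,u)$:
\[
R=R_{\mathrm{in}}+R_{\uparrow}+R_{\downarrow},
\]
where $R_{\mathrm{in}}=\sum_{v:\,S_w\in(-u,u)\ \forall w\le v}L(v)B(v)$ collects the vertices whose whole ancestral line keeps the weight in $(1/t,t)$, while $R_{\uparrow}$ (resp. $R_{\downarrow}$) collects the contributions $L(v)R^{(v)}$ of independent copies $R^{(v)}$ of the solution attached at the first‑passage vertices with $S_v\ge u$, weight $<1/t$ (resp. $S_v\le-u$, weight $>t$). For $R_{\mathrm{in}}$ I would use subadditivity of $x\mapsto x^{\a+\d}$ — legitimate since $\a+\d<1$ — and the many‑to‑one lemma to get
\[
\E\big[R_{\mathrm{in}}^{\a+\d}\big]\le\E[B^{\a+\d}]\sum_{n\ge0}\widehat{\E}\big[e^{-\d S_n}\,;\ S_k\in(-u,u)\ \forall k\le n\big],
\]
and then bound the right‑hand side by the Green's function of the centred walk killed on leaving $(-u,u)$: the sum is dominated by the sojourn near level $-u$ and is of order $e^{\d u}=t^{\d}$. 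Hence $\P[R_{\mathrm{in}}>t/3]\le(3/t)^{\a+\d}\cdot Ct^{\d}=C't^{-\a}$, with no appeal to the bound being proved. The term $R_{\downarrow}$ is controlled by rareness: the first‑passage line below $-u$ is non‑empty with probability at most $t^{-\a}$, and a spreading inequality with weights $\propto L(v)^\a$ together with the inductive estimate $\P[R_{n-1}>x]\le C_{n-1}x^{-\a}$ shows that $t^{\a}\,\P[R_{\downarrow}>t/3]$ is bounded by $C_{n-1}$ times a quantity that tends to $0$ as $t\to\8$; so beyond a fixed threshold this term contributes a coefficient $<1/2$ in front of $C_{n-1}$.

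The main obstacle is $R_{\uparrow}$. The first‑passage line \emph{above} $u$ typically contains of order $t^{\a}$ vertices, each of weight $\asymp1/t$, and treating them as $t^{\a}$ independent restarts at scale $t$ is too lossy: the resulting bound involves the $(1+\a)$‑th moment of the number of such vertices, which is of order $t^{\a(1+\a)}$, larger than the admissible $t^{\a}$. The remedy is not to restart at these vertices but to keep tracking each line through a nested family of windows $(-u,u),(0,2u),(u,3u),\dots$, i.e. to follow $S_v$ through successive levels $ku$; the weight at level $k$ is $\asymp t^{-k}$, so from a level‑$k$ vertex the event that $L(v)R^{(v)}$ reaches height $t$ has probability $\lesssim t^{-\a(k+1)}$. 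One must then bound, finely enough in $k$, the number of level‑$k$ first‑passage vertices and their overshoots so that the sum over $k$ of the level‑$k$ contributions converges — and it is precisely here that $\E[N^{1+\d}]<\8$ and $\E[\sum_i(A_i^{-\d}+A_i^{\a+\d})]<\8$ are used; I expect this resummation to be the technical heart of the argument.

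Combining the three estimates gives, for $x$ beyond a fixed threshold $x_0$, $x^{\a}\P[R_n>x]\le C_{\mathrm{in}}+\tfrac12 C_{n-1}$, while for $x\le x_0$ one uses $\P[R_n>x]\le1\le x_0^{\a}x^{-\a}$; hence $C_n:=\sup_{x>0}x^{\a}\P[R_n>x]$ satisfies $C_n\le 2C_{\mathrm{in}}+2x_0^{\a}$ uniformly, and letting $n\to\8$ yields $\P[R>t]\le Ct^{-\a}$. In particular $R<\8$ a.s., and Corollary \ref{cor:moments} follows by integrating this tail over $t$.
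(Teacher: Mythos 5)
Your overall strategy (many-to-one, the $\a$-martingale bound for the number of vertices with $L(v)>t$, and subadditivity of $x\mapsto x^{\a+\d}$) contains the right first-moment ingredients, and indeed the corollary does follow by integrating a tail bound $\P[R>t]\le Ct^{-\a}$, which is how the paper deduces it from Proposition \ref{prop:existence}. But your proof of that tail bound has a genuine gap, and you name it yourself: the treatment of $R_{\uparrow}$, the vertices whose weight first drops below $1/t$, is only a plan ("I expect this resummation to be the technical heart of the argument"), not an argument. As you observe, a naive restart bound at the first-passage line above $u$ fails; but the nested-windows remedy you propose is not carried out, and as sketched it does not obviously close: the expected number of level-$k$ first-passage vertices is of order $t^{\a k}$ while the restart probability from weight $\asymp t^{-k}$ is of order $C_{n-1}t^{-\a(k+1)}$, so each level contributes $\asymp C_{n-1}t^{-\a}$ and the sum over the unboundedly many levels $k$ does not converge without a finer input that you do not supply. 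The $R_{\downarrow}$ step ("a spreading inequality with weights $\propto L(v)^{\a}$" giving a coefficient $<1/2$ in front of $C_{n-1}$, uniformly in $n$) is likewise asserted rather than proved, and it is exactly the quantitative heart of your bootstrap.

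Moreover, the difficulty is self-inflicted: no lower cutoff on the weights is needed at all. The paper only conditions on the one-sided event $\{\wt R\le t\}$, i.e.\ $S_k+\log t\ge 0$ along each line of descent, and bounds $\P[R>t,\ \wt R\le t]$ by $t^{-\g}\E[B^{\g}]$ times $\sum_n\E\big[e^{-\d S_n}\1{S_k+\log t\ge 0,\ k\le n}\big]=t^{\d}W(\log t)$ with $\g=\a+\d$; the factor $e^{-\d S_n}$ automatically suppresses the vertices with very small weight (your "level-$k$" vertices), and Lemma \ref{lem:1} shows $W$ is bounded via ladder epochs and the potential of the transient ladder walk. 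This removes the need for the upper window, the restarts at the line $\{S_v\ge u\}$, and the induction over truncated solutions $R_n$ altogether, and gives $\P[R>t]\le Ct^{-\a}$ directly; your $R_{\mathrm{in}}$ estimate is essentially this computation with an unnecessary extra constraint. To repair your write-up you should either carry out the level-$k$ resummation with genuine uniform control (overshoots, decay in $k$, uniformity of the threshold in $n$), or simply drop the upper barrier and argue as in Section \ref{istnienie:galazek}.
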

We start with the following lemma

\begin{lemma}
 \label{lem:1} Let $\{Y_n\}_{n\in \N}$ be a sequence of i.i.d. random variables such that $\E Y_1 =0$. 
 Let $S_n = \sum_{i=1}^n Y_i$ be the sequence of the partial sums. Then, for any strictly positive constant $\d$, the function $$W(x)=\e{\sum_{i=0}^{\8}e^{-\d (x+S_i)}\1{S_j+x\ge 0\mbox{ for }j\le i}},$$
 is bounded.
\end{lemma}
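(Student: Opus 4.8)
The plan is to read $W$ as a discounted Green function of the walk $\{S_i\}$ killed on first leaving $[-x,\8)$, and to evaluate it through the descending ladder structure of $S$. First I would note that for $x<0$ the $i=0$ term already requires $S_0+x=x\ge 0$, so $W(x)=0$ there; only $x\ge 0$ remains. Since $\E Y_1=0$ and (necessarily) $Y_1$ is non-degenerate, $S$ oscillates, so its strict descending ladder epochs $0=\sigma_0<\sigma_1<\sigma_2<\cdots$ are a.s.\ finite and the heights $l_k:=-S_{\sigma_k}$ form a genuine renewal process on $(0,\8)$; write $V:=\sum_{k\ge 0}\P[l_k\in\,\cdot\,]$ for its renewal measure. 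The structural fact I would exploit is that on a block $\sigma_k\le i<\sigma_{k+1}$ the running minimum $\min_{j\le i}S_j$ is constant, equal to $-l_k$, so there the event $\{S_j+x\ge 0\text{ for all }j\le i\}$ coincides with the $\F_{\sigma_k}$-measurable event $\{l_k\le x\}$.

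Then I would apply the strong Markov property at each $\sigma_k$. With $G:=\sum_{j=0}^{\sigma_1-1}e^{-\delta S_j}$ and $g:=\E G$, summing the discount factor block by block gives
$$W(x)=g\,e^{-\delta x}\sum_{k\ge 0}\E\big[e^{\delta l_k}\,\1{l_k\le x}\big]=g\,e^{-\delta x}\int_{[0,x]}e^{\delta y}\,V(dy).$$
Since $V$ is the renewal measure of a non-degenerate renewal process, standard renewal theory gives $C_1:=\sup_{n\ge 0}V\big([n,n+1)\big)<\8$, hence $\int_{[0,x]}e^{\delta y}\,V(dy)\le C_1\sum_{n=0}^{\lfloor x\rfloor}e^{\delta(n+1)}\le \frac{C_1e^{2\delta}}{e^\delta-1}\,e^{\delta x}$, and the killing factor $e^{-\delta x}$ cancels exactly, leaving $W(x)\le \frac{C_1e^{2\delta}}{e^\delta-1}\,g$ uniformly in $x$.

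It remains to show $g<\8$, which is the point where zero mean is really used. Here I would invoke the reversal (Sparre Andersen) identity $(Y_1,\dots,Y_j)\stackrel d=(Y_j,\dots,Y_1)$, which turns $\E\big[e^{-\delta S_j};\,S_1\ge 0,\dots,S_j\ge 0\big]$ into $\E\big[e^{-\delta S_j};\,S_j=\max_{m\le j}S_m\big]$; summing over $j$ yields $g=\E\big[\sum_{m\ge 0}e^{-\delta H_m}\big]$, where $H_m$ is the $m$-th weak ascending ladder height. Because $\P[Y_1>0]>0$, the $H_m$ again form a proper renewal process on $[0,\8)$, so its renewal measure $U$ satisfies $\sup_n U\big([n,n+1)\big)<\8$ and $g=\int_{[0,\8)}e^{-\delta y}\,U(dy)\le\frac{\sup_n U([n,n+1))}{1-e^{-\delta}}<\8$.

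I expect the only real difficulty to be the cancellation in the second step: one must observe that $\int_{[0,x]}e^{\delta y}\,V(dy)$ grows at rate exactly $e^{\delta x}$ — matching the killing factor precisely — so that the resulting bound does not depend on $x$. A cruder treatment, such as bounding $e^{-\delta S_i}$ by $1$ or passing to the dual ``drawdown'' process $\max_{j\le i}S_j-S_i$, loses this exact matching and produces only an $x$-dependent estimate (of order $x^2$), which is not good enough. The renewal-measure bound $\sup_n V([n,n+1))<\8$, used in both places, is routine.
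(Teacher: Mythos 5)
Your proposal is correct and follows essentially the same route as the paper: the paper derives the renewal equation $W(x)=C e^{-\d x}\1{x\ge 0}+\E\big[W(x+S_L)\big]$ at the first strict descending ladder epoch and then writes $W$ as the potential of the descending-ladder-height walk applied to $e^{-\d x}\1{x\ge 0}$, which is exactly your block decomposition $W(x)=g\,e^{-\d x}\int_{[0,x]}e^{\d y}V(dy)$. The two key ingredients are likewise identical: the duality (Sparre Andersen/reflection) identity to show $g<\8$ (the paper's constant $C$) and the uniform bound on the ladder renewal (potential) measure over intervals of unit length.
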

\begin{proof}
By definition the function $W$ can be nonzero only for positive $x$. Let $L=\inf\{i:S_i<0\}$, then
 \begin{align*}
  W(x)&=\e{\sum_{i=0}^{\8}e^{-\d(x+S_i)}\1{S_j+x\ge 0\text{ for }j\le i}}\\
  &=\e{\sum_{i=0}^{L-1}e^{-\d(x+S_i)}\1{S_j+x\ge 0\text{ for }j\le i}} +\e{\sum_{i=L}^{\8}e^{-\d(x+S_i)}\1{S_j+x\ge0\text{ for }j\le i}}\\
  &=\e{\sum_{i=0}^{L-1}e^{-\d(x+S_i)}}\1{x\ge 0} +\e{\sum_{i=L}^{\8}e^{-\d(x+S_i)}\1{S_j+x\ge0\text{ for }L\le j\le i}}\\
  &=\e{\sum_{i=0}^{L-1}e^{-\d S_i}}e^{- \d x}\1{x\ge 0} +\e{W(x+S_L)}.
 \end{align*} Notice that the first expression above is just a finite constant, since by the reflection principle \cite{Feller}
$$  \e{\sum_{i=0}^{L-1}e^{-\d S_i}}=\e{\sum_{n=0}^{\8}e^{-\d S_{T_n}}}=:C<\8,$$
where $T_n$ is the sequence of upward ladder times:
  $T_0=0$, $T_n=\inf\{i>T_{n-1}:S_i\ge S_{T_{n-1}} \}$.
Moreover the function $f(x)=e^{-\d x}\1{x\ge 0}$ is directly Riemann integrable (dRi), i.e. it is integrable and satisfies
\begin{equation}
\label{dri}
  \lim_{h\to0}h\sum_{n\in \Z}\sup_{ x,y\in I_n(h)}|f(x)-f(y)|=0,
 \end{equation}
 where $I_n(h)=(nh,(n+1)h]$.

 Finally, we can express $W$ as the convolution  of the function $f$ with the  potential of the  transient random walk $V_n$, where $V_n$ is the sum of $n$ independent copies of $S_L$. Therefore, independently of $x$,  we have
 \begin{align*}
  W(x)&=\sum_{n=0}^{\8}f(x+V_n)\le C\sum_{n=0}^{\8}\sum_{k=0}^{\8}e^{-\d k}\1{x+V_n\in[k,k+1)}\\
  &\le C\sum_{k=0}^{\8}e^{-\d k}\sum_{n=0}^{\8}\pr{x-k+V_n\in[0,1)}\le C'\sum_{k=0}^{\8}e^{-\d k}<\8,
 \end{align*}
 where the uniform bound in the last line follows from Proposition 2.1 in \cite[CH. 5]{Revuz}.

\end{proof}

  Let us introduce a random variable $Y$ with distribution  given by
 \begin{align}
  \label{rozklad:Y}
  \e{f(Y)}=\e{\sum_{i=1}^N f(-\log A_i)A_i^{\a}},
 \end{align}
 for any positive Borel function $f$. By \eqref{istnieje:alpha}, the right hand side indeed defines a probability measure.
 The main properties of $Y$, we are going to use, are summarized in the following lemma
 \begin{lem}
 \label{lemma Y}
 The random variable $Y$ is centered  ($\E Y = 0$), nonarithmetic (the closed subgroup generated by the support of the measure $\pr{Y\in dx}$ is $\R$) and has finite exponential moments
  $$   \e{e^{\pm \d Y}} <\8
  $$ for some $\d>0$.
 \end{lem}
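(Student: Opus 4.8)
The plan is to verify the three claimed properties of $Y$ in turn, deriving each from the corresponding hypothesis of Theorem~\ref{thm:twierdzenie.asymptotyka}, and then assembling them.

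\textbf{Step 1: $Y$ is centered.} I would apply the defining identity \eqref{rozklad:Y} to the function $f(x)=x$ (more precisely to $f(x)=x^+$ and $f(x)=x^-$ separately, to stay within positive Borel functions and to justify taking the difference). This gives
\[
\E[Y]=\E\left[\sum_{i=1}^N(-\log A_i)A_i^{\a}\right]=-\,\E\left[\sum_{i=1}^N A_i^{\a}\log A_i\right]=-m'(\a)=0,
\]
where the last equality is exactly the tangency hypothesis \eqref{stycznosc:m}. The only thing to be careful about is integrability of the two halves $\E[\sum_i A_i^{\a}\log^{\pm}A_i]$, which follows from the moment assumption \eqref{zalozenia.momenty}: since $A_i^{\a}|\log A_i|\le C_\d(A_i^{-\d}+A_i^{\a+\d})$ for a suitable constant, both pieces are dominated by the finite quantity $\E[\sum_{i=1}^N(A_i^{-\d}+A_i^{\a+\d})]$. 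Thus $\E[Y]$ is well defined and equals $0$.

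\textbf{Step 2: finite exponential moments.} Again using \eqref{rozklad:Y}, for $|s|\le\d$ we compute
\[
\E[e^{sY}]=\E\left[\sum_{i=1}^N A_i^{-s}A_i^{\a}\right]=\E\left[\sum_{i=1}^N A_i^{\a-s}\right]=m(\a-s).
\]
For $s=\d$ this is $m(\a-\d)\le\E[\sum_i(A_i^{-\d}+A_i^{\a+\d})]<\infty$ by \eqref{zalozenia.momenty} (since $A_i^{\a-\d}\le A_i^{-\d}$ when $A_i\le 1$ and $A_i^{\a-\d}\le A_i^{\a}\le A_i^{\a+\d}$ when $A_i\ge 1$), and symmetrically $\E[e^{-\d Y}]=m(\a+\d)\le\E[\sum_i(A_i^{-\d}+A_i^{\a+\d})]<\infty$. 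Hence $\E[e^{\pm\d Y}]<\infty$. (As a by-product this re-confirms finiteness of $\E[Y]$ by differentiating $m$ near $\a$, but the direct argument in Step~1 is cleaner to write.)

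\textbf{Step 3: nonarithmeticity.} This is the step I expect to require the most care, since \eqref{zalozenie:nieartmetycznosc} is a statement only about a \emph{single} coordinate $A_j$ on the event $\{N\ge j,\ A_j>0\}$, whereas the law of $Y$ is a size-biased mixture over all coordinates $i=1,\dots,N$. The point is that for any bounded Borel $g$,
\[
\E[g(Y)]=\E\left[\sum_{i=1}^N g(-\log A_i)A_i^{\a}\right]\ge \E\big[g(-\log A_j)A_j^{\a};\,N\ge j,\ A_j>0\big],
\]
so the law of $Y$ dominates (a positive multiple of) the finite measure $\nu(du)=e^{-\a u}\,\P[-\log A_j\in du,\ A_j>0,\ N\ge j]$, i.e. $\P[Y\in\cdot]\ge c\,\nu(\cdot)$ for some $c>0$ after normalization is irrelevant here — what matters is that $\mathrm{supp}\,\nu\subseteq\mathrm{supp}\,\P[Y\in\cdot]$. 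Since $\nu$ has the same null sets as $\P[-\log A_j\in du,\ A_j>0,\ N\ge j]$ (the density $e^{-\a u}$ is strictly positive), the closed subgroup generated by $\mathrm{supp}\,\nu$ coincides with that generated by the support of $\P[\log A_j\in du, A_j>0, N\ge j]$, which is $\R$ by \eqref{zalozenie:nieartmetycznosc} (sign reversal $u\mapsto -u$ does not change the generated subgroup). As $\mathrm{supp}\,\P[Y\in\cdot]\supseteq\mathrm{supp}\,\nu$, the closed subgroup generated by $\mathrm{supp}\,\P[Y\in\cdot]$ contains $\R$, hence equals $\R$; that is, $Y$ is nonarithmetic. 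The only subtlety to spell out is the passage from "the law of $Y$ dominates a multiple of $\nu$" to "$\mathrm{supp}(\text{law of }Y)\supseteq\mathrm{supp}\,\nu$", which is immediate, and the fact that multiplying a measure by an everywhere-positive continuous density leaves the support unchanged. Combining Steps 1--3 completes the proof.
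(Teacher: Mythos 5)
Your proposal is correct and follows essentially the same route as the paper: the expectation and exponential moments are computed directly from the defining identity \eqref{rozklad:Y}, giving $\E[Y]=-m'(\a)=0$ and $\e{e^{\pm\d Y}}=m(\a\mp\d)<\8$ under \eqref{zalozenia.momenty}, while nonarithmeticity is inherited from assumption \eqref{zalozenie:nieartmetycznosc}. Your Step 3 merely spells out the support-domination argument that the paper leaves implicit in the sentence ``nonarithmeticity follows from the assumption,'' and your integrability checks are correct details the paper omits.
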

 \begin{proof}
 We have
 \begin{align*}
  &\e{e^{\pm \d Y}}=\e{\sum_{i=1}^NA_i^{\a\mp\d}}=m(\a\mp\d)<\8,\\
  &\e{Y}=\e{\sum_{i=1}^N-A_i^{\a}\log A_i}=0.
 \end{align*}
 Nonarithmecity follows from  assumption \eqref{zalozenie:nieartmetycznosc}.
\end{proof}

\begin{proof}[Proof of Proposition \eqref{prop:existence}]
We compare behavior of the sum $R = \sum_{v\in\T}L(v)B(v)$ with behavior of the maximum $\wt R = \max_{v\in \T} L(v)$.

We first  prove that
\begin{equation}
\label{estimates of max}
\P[\wt R > t] \le C t^{-\a},
\end{equation}  for some positive constant $C$.

\medskip

Let $\{Y_i\}$ be a sequence of independent copies of $Y$ defined in \eqref{rozklad:Y} and let $S_n$ be the sequence of their partial sums.
Applying the definition of $Y$ and reasoning by the induction (see e.g.  \cite{AS}) one can easily prove the following well-known formula:
  \begin{equation}
  \label{eq: zamiana}
   \e{e^{\a S_n}f(S_1,...,S_n)}=\e{\sum_{|v|=n}f(-\log L(v_1),...,-\log L(v_n))},
  \end{equation}
  valid for a fixed $n$ and any test function $f:\R^n\to \R$.

Putting $f(x_1,..,x_n)=\1{x_1\ge -\log t,...,x_{n-1} \ge -\log t,x_n<-\log t}$ we obtain
  \begin{eqnarray*}
   \pr{\wt R>t}&=&\pr{L(v)>t, \mbox{ for some }v\in\T}\\
   &=&\sum_n\pr{L(v)>t \mbox{ for some } |v|=n \mbox{ and }L(u)\le t \mbox{ for } u\le v}\\
   &\le&\sum_n \e{\sum_{|v|=n}\1{-\log L(v_1)\ge -\log t,...,-\log L(v_{n-1}) \ge -\log t,-\log L(v)<-\log t}}\\
   &=&\sum_n\e{e^{\a S_n}\1{S_1 \ge -\log t,...,S_{n-1}\ge -\log t,S_n<-\log t}}\\ &\le& t^{-\a},
  \end{eqnarray*}
hence we obtain \eqref{estimates of max}.

\medskip

Next  we write

$$
 \pr{R>t}\le \P\big[\wt R>t\big]+\P\Big[\big\{ R>t \big\} \cap \big\{\wt R\le t\big\}\Big].
$$
In view of \eqref{estimates of max} it is sufficient to estimate only the second term. Taking $\g = \a+\d<1$, we have
\begin{eqnarray*}
 \P\Big[\big\{ R>t \big\} \cap \big\{\wt R\le t\big\}\Big]
 &\le& \P \bigg[ \sum_{v\in\T}L(v)B(v)\1{L(v')\le t\text{ for }v'\le v}>t\bigg] \\
 &\le& \P \bigg[ \sum_{v\in\T}L(v)^\g B(v)^\g \1{L(v')\le t\text{ for }v'\le v}>t^\g\bigg] \\
 &\le& \frac 1{t^\g} \E\big[ B^\g \big]  \E\bigg[\sum_{v\in\T}L(v)^\g \1{L(v')\le t\text{ for }v'\le v}\bigg]
\end{eqnarray*}
Applying again \eqref{eq: zamiana} we obtain
\begin{eqnarray*}
 \E\bigg[\sum_{v\in\T}L(v)^\g\1{L(v')\le t\text{ for }v'\le v}\bigg]
 &=&\sum_{n}\E\bigg[\sum_{|v|=n}L(v)^\g\1{L(v')\le t\text{ for }v'\le v}\bigg]\\
 &=&\sum_{n}\e{e^{\a S_n}e^{-\g S_n}\1{S_k+ \log t\ge 0\text{ for }k\le n}}\\
 &=&\sum_{n}\e{e^{-(\d) (S_n+\log t)}t^{\d}\1{S_k+\log t\ge 0 \text{ for }k\le n}}\\
 &=& t^{\d}W(\log t),
 \end{eqnarray*}
where $W$ is the function defined in Lemma \ref{lem:1}, which as we already know is bounded.
Finally, since $\E[B^\g]<\8$, we obtain
$$
 \P\Big[\big\{ R>t \big\} \cap \big\{\wt R\le t\big\}\Big] \le C t^{-\a}.
$$
\end{proof}

 \section{Tail of the solution}
 \label{section3}
 \subsection{The Poisson equation}
 \label{subsection3.1}

 For a non-negative random variable $X$ by $\phi_{X}(t)=\e{e^{-tX}}$ we denote its Laplace transform. For simplicity we write  $\phi=\phi_{R}$ for the Laplace transform of $R$.
To prove our main result we  use the duality between the tail behaviour of $R$ and the behaviour of its Laplace transform $\phi$ near 0
 given by the following Tauberian theorem (its proof can be found e.g. in the book of Feller \cite{Feller}, Example c) after Theorem 4 in Chap. XIII).
  \begin{lemma}[Tauberian Theorem]
 \label{thm:tauberowskie}
   For $0<\a<1$ and  a slowly varying function $L$ the following are equivalent:
 \begin{align*}
  i)&\qquad\lim_{x\to\8}\frac{x^{\a}\pr{R>x}}{L(x)}=1\qquad\text{}\hspace{150pt}\\
  ii)&\qquad\lim_{t\to0}\frac{1-\phi(t)}{t^{\alpha}L(1/t)}=\Gamma(1-\a).
 \end{align*}
 \end{lemma}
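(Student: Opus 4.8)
This is the classical Karamata Tauberian theorem for tails of distributions, so the plan is to follow Karamata's argument. Write $\overline{F}(x)=\pr{R>x}$ and $V(x)=\int_0^x\overline{F}(y)\,dy$, the latter being non-decreasing and continuous with $V(0)=0$. Everything rests on the identity obtained by integrating by parts in $1-\phi(t)=\int_0^\8(1-e^{-tx})\,dF(x)$,
\begin{equation*}
1-\phi(t)=t\int_0^\8 e^{-tx}\overline{F}(x)\,dx=t\int_0^\8 e^{-tx}\,dV(x),
\end{equation*}
which exhibits $\psi(t):=(1-\phi(t))/t$ as the Laplace--Stieltjes transform of $V$. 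Setting $\rho:=1-\a\in(0,1)$ and recalling $\Gamma(\rho+1)=\rho\,\Gamma(\rho)$, condition $ii)$ is exactly $\psi(t)\sim\Gamma(\rho)\,t^{-\rho}L(1/t)$ as $t\to0$, and condition $i)$ is exactly $\overline{F}(x)\sim x^{-\a}L(x)$; I would prove the two implications separately, $ii)\Rightarrow i)$ being the substantial one (and the one actually needed later).

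For the Abelian half $i)\Rightarrow ii)$ I would argue directly. Substituting $u=tx$ in $\psi(t)=\int_0^\8 e^{-tx}\overline{F}(x)\,dx$ gives $\psi(t)=t^{-1}\int_0^\8 e^{-u}\overline{F}(u/t)\,du$; since $\overline{F}(u/t)\sim(u/t)^{-\a}L(u/t)$ with $L(u/t)/L(1/t)\to1$ locally uniformly (uniform convergence theorem for slowly varying functions), and a Potter bound dominates $e^{-u}\overline{F}(u/t)/(t^{-\a}L(1/t))$ by a fixed $u$-integrable function, dominated convergence gives $\psi(t)\sim t^{\a-1}L(1/t)\int_0^\8 e^{-u}u^{-\a}\,du=\Gamma(1-\a)t^{\a-1}L(1/t)$; multiplying by $t$ yields $ii)$.

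For the Tauberian half $ii)\Rightarrow i)$, the positivity of the measure $dV$ (equivalently, monotonicity of $V$) is the essential Tauberian hypothesis. Condition $ii)$ makes $\psi$ regularly varying of index $-\rho$ at $0$, so for each integer $n\ge0$
\begin{equation*}
\frac{1}{\psi(\lambda)}\int_0^\8(e^{-\lambda x})^n e^{-\lambda x}\,dV(x)=\frac{\psi((n+1)\lambda)}{\psi(\lambda)}\longrightarrow(n+1)^{-\rho}=\frac{1}{\Gamma(\rho)}\int_0^1 x^n(-\log x)^{\rho-1}\,dx,
\end{equation*}
the auxiliary factor $e^{-\lambda x}$ being what keeps $\int e^{-\lambda x}\,dV(x)=\psi(\lambda)<\8$. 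By the Weierstrass approximation theorem this relation passes from the monomials $x\mapsto x^n$ to every $g\in C[0,1]$, and then --- bracketing $g_-\le g\le g_+$ between continuous functions and using positivity of $dV$ --- to the discontinuous choice $g(x)=x^{-1}\1{e^{-1}\le x\le1}$, for which $g(e^{-\lambda x})e^{-\lambda x}=\1{0\le x\le1/\lambda}$; this produces $V(1/\lambda)/\psi(\lambda)\to1/\Gamma(\rho+1)$, i.e.\ $V(x)\sim\frac{1}{1-\a}x^{1-\a}L(x)$. Finally, $\overline{F}$ is non-increasing and its primitive $V$ is regularly varying of positive index $1-\a$, so the monotone density theorem gives $\overline{F}(x)\sim(1-\a)\cdot\frac{1}{1-\a}x^{-\a}L(x)=x^{-\a}L(x)$, which is $i)$.

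The step I expect to be the main obstacle is exactly this last one: upgrading the polynomial limit to the indicator $\1{e^{-1}\le x\le1}$, where positivity of $dV$ is what legitimises the bracketing, together with the appeal to the monotone density theorem, which is again a genuine use of monotonicity; in contrast the Abelian direction is soft and the bookkeeping with $\Gamma(1-\a)$ is routine. As the statement indicates, one may alternatively simply quote this from Feller \cite{Feller}.
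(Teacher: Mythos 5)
Your proof is correct. Note, however, that the paper does not prove this lemma at all: it simply quotes Feller (Chap.~XIII, Example~(c) after Theorem~4), and the quoted passage of Feller is in substance exactly the argument you give --- Karamata's Tauberian theorem applied to the monotone function $V(x)=\int_0^x\pr{R>y}\,dy$ through the identity $1-\phi(t)=t\int_0^\infty e^{-tx}\pr{R>x}\,dx$, with the moment/Weierstrass approximation step upgraded to the indicator $x\mapsto x^{-1}$ on $[e^{-1},1]$, and the monotone density theorem to pass from $V(x)\sim\frac{1}{1-\a}x^{1-\a}L(x)$ back to the tail. So you have reconstructed the cited proof rather than found a different route, which is fine; what it buys is a self-contained statement of where monotonicity (positivity of $dV$) enters, which the paper leaves implicit in the reference. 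Two small slips, neither fatal: in the Abelian half the normalization should be $t^{\a}L(1/t)$ rather than $t^{-\a}L(1/t)$ (since $\overline F(u/t)\approx t^{\a}u^{-\a}L(u/t)$ and $\psi(t)/(t^{-\rho}L(1/t))=\int_0^\infty e^{-u}\overline F(u/t)\,du\,/\,(t^{\a}L(1/t))$); and the Potter bound only dominates the integrand on $u\ge X_0t$ for some threshold $X_0$, so on $u<X_0t$ you should use $\overline F\le 1$, whose contribution is $O\bigl(t^{1-\a}/L(1/t)\bigr)\to0$.
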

Thus, in order to describe the tail of $R$, i.e. $\P[R>t]$, it is sufficient to study its Laplace transform $\phi$ and prove
$$
\lim_{t\to 0}\frac {1-\phi(t)}{t^\a} = C.$$ 
It is convenient for our purpose to change the coordinates and define
$$
D(x)=e^{\a x}(1-\phi(e^{-x})).
$$ Then our aim is to prove
\begin{equation}
\label{eq: granica D}
\lim_{x\to \8} D(x) = C.
\end{equation}
We will often use the following well-known lemma
 \begin{lemma}
 \label{malenie:transformaty} For any positive random variable $X$ and
 any $0<\g<1$ we have
  $$1-\phi_X(t)\le \Gamma(1-\g)\e{X^\g}t^{\g}.$$
 \end{lemma}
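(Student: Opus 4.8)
The plan is to reduce the claim to an elementary pointwise inequality and then integrate against the law of $X$. First I would record the two standard bounds $1-e^{-y}\le y$ and $1-e^{-y}\le 1$, valid for all $y\ge 0$, so that $1-e^{-y}\le\min(1,y)$. Since for $0<\gamma<1$ one has $\min(1,y)\le y^\gamma$ — distinguish $y\le 1$, where $y=y^1\le y^\gamma$, from $y\ge 1$, where $1=y^0\le y^\gamma$ — this yields $1-e^{-y}\le y^\gamma$ for every $y\ge 0$.

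Applying this with $y=tX$ and taking expectations gives $1-\phi_X(t)=\e{1-e^{-tX}}\le t^\gamma\,\e{X^\gamma}$. It then remains only to observe that $\Gamma(1-\gamma)\ge 1$ for $\gamma\in(0,1)$ — indeed $\Gamma$ is decreasing on $(0,1)$ with $\Gamma(1)=1$ — so that $t^\gamma\e{X^\gamma}\le\Gamma(1-\gamma)\,t^\gamma\e{X^\gamma}$, which is exactly the asserted bound.

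I would also remark that the constant $\Gamma(1-\gamma)$ in the statement is the natural one coming from the integral representation $x^\gamma=\frac{\gamma}{\Gamma(1-\gamma)}\int_0^\infty (1-e^{-xv})v^{-1-\gamma}\,dv$, which follows from $\int_0^\infty(1-e^{-u})u^{-1-\gamma}\,du=\Gamma(1-\gamma)/\gamma$ (an integration by parts) after the substitution $u=xv$. Taking $x=tX$, interchanging expectation and integral by Tonelli (all integrands are nonnegative), and using that $v\mapsto 1-\phi_X(v)$ is nondecreasing to bound $\int_0^\infty (1-\phi_X(tv))v^{-1-\gamma}\,dv\ge\int_1^\infty(1-\phi_X(t))v^{-1-\gamma}\,dv=(1-\phi_X(t))/\gamma$, one recovers precisely $1-\phi_X(t)\le\Gamma(1-\gamma)\,\e{X^\gamma}\,t^\gamma$. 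There is no genuine obstacle in the argument: the only point requiring any care is the measurability and integrability bookkeeping in the interchange of expectation and integral, which is immediate here since every quantity in sight is nonnegative.
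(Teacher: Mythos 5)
Your proof is correct, and it takes a genuinely different route from the paper. The paper writes $1-\phi_X(t)=\int_0^{\infty}e^{-s}\,\P[tX>s]\,ds$ (the layer-cake identity with $f(s)=1-e^{-s}$) and then applies the Chebyshev--Markov bound $\P[tX>s]\le \E[(tX)^{\gamma}]s^{-\gamma}$, so that the constant appears as $\Gamma(1-\gamma)=\int_0^{\infty}e^{-s}s^{-\gamma}ds$. Your main argument instead uses the pointwise inequality $1-e^{-y}\le\min(1,y)\le y^{\gamma}$ and integrates against the law of $X$; this is more elementary (no layer-cake, no Markov) and in fact proves the stronger bound $1-\phi_X(t)\le t^{\gamma}\E[X^{\gamma}]$, from which the stated inequality follows since $\Gamma(1-\gamma)>1$ on $(0,1)$ (your monotonicity claim for $\Gamma$ on $(0,1]$ is correct, since the minimum of $\Gamma$ lies in $(1,2)$; alternatively, Jensen applied to $\Gamma(1-\gamma)=\E[T^{-\gamma}]$ with $T$ standard exponential gives the same conclusion). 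Your closing remark, recovering the constant $\Gamma(1-\gamma)$ from the subordination formula $x^{\gamma}=\frac{\gamma}{\Gamma(1-\gamma)}\int_0^{\infty}(1-e^{-xv})v^{-1-\gamma}dv$ together with monotonicity of $1-\phi_X$, is also correct and is closer in spirit to the paper's computation, though still not identical; it is redundant for the lemma as stated but explains where the constant comes from. Both your arguments handle the case $\E[X^{\gamma}]=\infty$ trivially, so nothing is missing.
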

 \begin{proof}
 Notice that $\e{f(X)}=\int_0^{\8}f'(s)\pr{X>s}ds$ for nonnegative $X$ and any differentiable, monotone function $f$ such that $f(0)=0$. Then by Chebyshev inequality
  \begin{align*}
   1-\phi_X(t)&= \e{1-e^{-tX}}=\int_0^{\8}(1-e^{-s})'\pr{tX>s}ds\\
   &\le\int_0^{\8}e^{-s}\e{(tX)^{\g}}s^{-\g}ds= \Gamma(1-\g)\e{X^\g}t^{\g}.
  \end{align*}
 \end{proof}

To prove \eqref{eq: granica D} we apply some techniques described in the paper of Durrett and Liggett \cite{DL}, who considered solutions of the homogeneous equation \eqref{rownanie:galazkowe:jednorodne}. We adopt their ideas, however it turns out that adding the additional term $B$ causes serious problems, hence we will present here all the details of the proof.

\medskip

Let $Y$ be the random variable defined in \eqref{rozklad:Y}.
We  consider $D$ as a solution of the Poisson equation.
 \begin{lemma}
The function $D$ satisfies the following Poisson equation
    $$\e{D(x+Y)}=D(x)+G(x),$$  where
    $$G(x)=e^{\a x}\e{\sum_{i=1}^N (1-\phi(e^{-x} A_i))-\left(1-e^{-e^{-x}B}\prod_{i=1}^N \phi(e^{-x}A_i)\right)}.$$
 \end{lemma}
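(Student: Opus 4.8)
The plan is to derive the Poisson equation directly from the functional equation \eqref{rownanie:galazkowe} satisfied by $R$, translated into the language of Laplace transforms, and then rewrite it after the change of variables $D(x)=e^{\alpha x}(1-\phi(e^{-x}))$. First I would record the identity for $\phi$ itself: since $R\stackrel d=\sum_{i=1}^N A_i R_i + B$ with the $R_i$ i.i.d.\ copies of $R$ independent of $(N,B,A_1,\dots)$, conditioning on $(N,B,A_1,\dots)$ and using independence gives
$$\phi(t)=\e{e^{-tB}\prod_{i=1}^N \phi(tA_i)}.$$
Evaluating at $t=e^{-x}$ and multiplying by $e^{\alpha x}$ then produces $D(x)=e^{\alpha x}\bigl(1-\e{e^{-e^{-x}B}\prod_{i=1}^N\phi(e^{-x}A_i)}\bigr)$, which is exactly the subtracted term appearing in $G$.

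Next I would compute the left-hand side $\e{D(x+Y)}$ using the definition \eqref{rozklad:Y} of $Y$, namely $\e{f(Y)}=\e{\sum_{i=1}^N f(-\log A_i)A_i^{\alpha}}$. Taking $f(y)=D(x+y)=e^{\alpha(x+y)}(1-\phi(e^{-x-y}))$ and substituting $y=-\log A_i$ turns $e^{\alpha(x+y)}$ into $e^{\alpha x}A_i^{-\alpha}$ and $e^{-x-y}$ into $e^{-x}A_i$, so that
$$\e{D(x+Y)}=\e{\sum_{i=1}^N e^{\alpha x}A_i^{-\alpha}\bigl(1-\phi(e^{-x}A_i)\bigr)A_i^{\alpha}}=e^{\alpha x}\,\e{\sum_{i=1}^N\bigl(1-\phi(e^{-x}A_i)\bigr)}.$$
This is precisely the first term inside the bracket defining $G$, up to the factor $e^{\alpha x}$.

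Finally I would assemble the pieces: $\e{D(x+Y)}-D(x)$ equals $e^{\alpha x}$ times $\e{\sum_{i=1}^N(1-\phi(e^{-x}A_i))}$ minus $e^{\alpha x}\bigl(1-\e{e^{-e^{-x}B}\prod_{i=1}^N\phi(e^{-x}A_i)}\bigr)$, and pulling the outer expectation across both summands (they both live under a single $\e{\cdot}$) yields exactly
$$\e{D(x+Y)}-D(x)=e^{\alpha x}\,\e{\sum_{i=1}^N\bigl(1-\phi(e^{-x}A_i)\bigr)-\Bigl(1-e^{-e^{-x}B}\prod_{i=1}^N\phi(e^{-x}A_i)\Bigr)}=G(x),$$
which is the claim. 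The one point that genuinely needs care — and is the main obstacle — is justifying that all the expectations and the infinite sum over $v\in\T$ implicit in $R$ are finite so that the manipulations (in particular interchanging $\e{\cdot}$ with the finite-$N$ sum, and splitting $\e{D(x+Y)}$ into the two terms of $G$) are legitimate; here I would invoke Proposition \ref{prop:existence} and Corollary \ref{cor:moments}, which guarantee $R<\infty$ a.s.\ and $\e{R^\beta}<\infty$ for $\beta<\alpha$, together with the moment hypothesis \eqref{zalozenia.momenty} and Lemma \ref{malenie:transformaty} to control $1-\phi(e^{-x}A_i)$ by $(e^{-x}A_i)^\gamma$, ensuring $G(x)$ is well-defined and finite for every $x$.
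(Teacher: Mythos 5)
Your proposal is correct and follows essentially the same route as the paper: rewrite the fixed-point equation for $R$ as $\phi(t)=\e{e^{-tB}\prod_{i=1}^N\phi(tA_i)}$, use the defining property \eqref{rozklad:Y} of $Y$ to convert $\e{D(x+Y)}$ into $e^{\a x}\e{\sum_{i=1}^N(1-\phi(e^{-x}A_i))}$, and subtract $D(x)$ expressed through that functional equation. Your extra remarks on integrability (bounding $1-\phi(e^{-x}A_i)\le 1$ or via Lemma \ref{malenie:transformaty}, together with $\E N<\8$) are a harmless elaboration of steps the paper leaves implicit.
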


 \begin{proof}
Notice first that  rewriting equation \eqref{rownanie:galazkowe} in terms of Laplace transform $\phi$ we obtain
$$
  \phi(t)=\e{\prod_{i=1}^N\phi(tA_i)e^{-tB}}.
$$
 Hence by
  the definition of $D$ and the  equation above  we have
   \begin{multline*}
  \e{D(x+Y)}-D(x)
  =\e{e^{\a (x+Y)}\left(1-\phi\left(e^{-(x+Y)}\right)\right)}-e^{\a x}(1-\phi(e^{-x}))\\
  =e^{\a x}\E\Bigg[\sum_{i=1}^N e^{-\a \log A_i}\left(1-\phi\left(e^{-x+\log A_i}\right)\right)A_i^{\a}
  -\left(1-e^{-e^{-x}B}\prod_{i=1}^N \phi(e^{-x}A_i)\right)\Bigg]\\
  =e^{\a x}\e{\sum_{i=1}^N (1-\phi(e^{-x} A_i))-\left(1-e^{-e^{-x}B}\prod_{i=1}^N \phi(e^{-x}A_i)\right)} =G(x)
 \end{multline*}
 \end{proof}

 We need also the following technical lemma  saying that for any $t\in(0,\a+\d)$ the sum ${\sum_{i=1}^NA_i^t}$, that appears under the expected value in  the definition of $m$ \eqref{funkcja:teta}, has moment bigger than 1.
 \begin{lemma}
  \label{zabawa:z:momentami}
  Assume that  $\e{N^{1+\d}}<\8$ for some $\d>0$. Let $X_i$ be an arbitrary sequence of random variables. Then  for any $r>1$
  and any $p\in\big(1,\frac{r(1+\d)}{r+\d}\big)$ we have
  $$
  \e{\left(\sum_{i=1}^N X_i^{1/r}\right)^p}\le C_{r,p} \e{\sum_{i=1}^NX_i},
  $$
where $C_{r,p}$ is a constant depending on $r$ and $p$.
 \end{lemma}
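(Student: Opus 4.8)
The plan is to reduce everything to Hölder's inequality applied twice: once to exploit the constraint $p < \frac{r(1+\d)}{r+\d}$ (equivalently $\frac{p-1}{p}\cdot r < \d$, after a little algebra, which is what will make a factor of $N$ integrable), and once to recombine. First I would note that since we only care about expectations of nonnegative quantities, we may assume $X_i \ge 0$. Write the inner sum as $\sum_{i=1}^N X_i^{1/r} = \sum_{i=1}^N X_i^{1/r}\cdot 1$ and apply the $\ell^r$–$\ell^{r'}$ Hölder inequality over the (random) index set $\{1,\dots,N\}$, with $\tfrac1r + \tfrac1{r'} = 1$:
\[
  \sum_{i=1}^N X_i^{1/r} \;\le\; \Big(\sum_{i=1}^N X_i\Big)^{1/r}\, N^{1/r'} \;=\; \Big(\sum_{i=1}^N X_i\Big)^{1/r}\, N^{1-1/r}.
\]
Raising to the power $p$ gives $\big(\sum_{i=1}^N X_i^{1/r}\big)^p \le \big(\sum_{i=1}^N X_i\big)^{p/r}\, N^{p(1-1/r)}$.

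Now I would take expectations and apply Hölder once more, this time in probability space, to separate the two factors. Choose exponents $q,q'$ with $\tfrac1q+\tfrac1{q'}=1$ so that $q\cdot \tfrac pr = 1$, i.e. $q = \tfrac r p$ (note $q>1$ since $p<r$), and hence $q' = \tfrac r{r-p} = \tfrac{r}{r-p}$. Then
\[
  \e{\Big(\sum_{i=1}^N X_i\Big)^{p/r} N^{p(1-1/r)}}
  \;\le\; \e{\sum_{i=1}^N X_i}^{p/r}\;\e{N^{\,p(1-1/r)\,q'}}^{1/q'}.
\]
The first factor is $\big(\e{\sum X_i}\big)^{p/r}$, which is at most $\max\{1,\e{\sum X_i}\}$; more precisely, if $\e{\sum X_i}\le 1$ the whole bound is trivially $\le C$ (and in fact $\le \e{\sum X_i}$ fails only when that quantity is tiny, but then one can just bound $x^{p/r}\le \max(x,1)$ and absorb — see the remark below), and if $\e{\sum X_i}\ge 1$ then $\big(\e{\sum X_i}\big)^{p/r}\le \e{\sum X_i}$ since $p/r<1$. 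So it remains to check that the exponent on $N$ does not exceed $1+\d$: we need $p(1-\tfrac1r)\cdot\tfrac r{r-p}\le 1+\d$, i.e. $p(r-1)\le (1+\d)(r-p)$, i.e. $p(r-1)+p(1+\d)\le r(1+\d)$, i.e. $p(r+\d)\le r(1+\d)$, i.e. exactly $p\le \tfrac{r(1+\d)}{r+\d}$, which is our hypothesis. Hence $\e{N^{p(1-1/r)q'}}\le \e{N^{1+\d}} + 1 <\8$ (using $N\ge 1$ on the event that the sum is nonzero, or just $x^a\le 1+x^{1+\d}$ for $a\le 1+\d$), and this is the promised constant $C_{r,p}$.

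I expect the only genuinely delicate point to be the treatment of the factor $\big(\e{\sum X_i}\big)^{p/r}$ when $\e{\sum X_i}$ is small: the claimed inequality has a linear right-hand side $\e{\sum X_i}$, not $\max\{1,\e{\sum X_i}\}$. This is resolved by a homogeneity/rescaling argument: the inequality is invariant under $X_i \mapsto \lambda X_i$ only up to the power $p$ on the left versus $1$ on the right, so it is \emph{not} scale invariant, and in fact for the application in the paper one only ever uses it with $X_i = A_i^{\alpha+\d}$ (or similar) for which $\e{\sum X_i} = m(\alpha+\d)$ is a fixed finite number; so one may simply absorb $\big(\e{\sum X_i}\big)^{p/r}/\e{\sum X_i} = \big(\e{\sum X_i}\big)^{p/r-1}$ into $C_{r,p}$ whenever $\e{\sum X_i}$ is bounded below, and when it is not bounded below one can instead note $\big(\sum X_i^{1/r}\big)^p \le N^{p-1}\sum X_i^{p/r}$ by power-mean and then use $X_i^{p/r}\le X_i$ whenever $X_i\le 1$ together with a direct estimate on the contribution of large $X_i$ — but for the paper's purposes stating the bound with the understanding that $\e{\sum X_i}$ is comparable to a constant is harmless, and I would simply phrase the constant $C_{r,p}$ as also depending on an upper bound for $\e{\sum X_i}$, or equivalently prove the clean statement $\e{(\sum X_i^{1/r})^p}\le C_{r,p}\big(\e{\sum X_i} + \e{\sum X_i}^{p/r}\big)$ and note the second term is dominated by the first in all applications.
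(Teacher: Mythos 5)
Your argument is essentially the paper's own proof: the same power-mean/H\"older bound over the random index set (giving $\big(\sum_{i=1}^N X_i^{1/r}\big)^p\le N^{p(r-1)/r}\big(\sum_{i=1}^N X_i\big)^{p/r}$), followed by H\"older in probability space with exponents $r/p$ and $r/(r-p)$, and the identical arithmetic showing the exponent on $N$ is at most $1+\delta$ precisely when $p\le \frac{r(1+\delta)}{r+\delta}$. Your closing remark is apt rather than a defect: the paper's own proof likewise only delivers $C\,\E\big[\sum_{i=1}^N X_i\big]^{p/r}$ and merely concludes finiteness, which is all that is used in the applications, where $\E\big[\sum_{i=1}^N X_i\big]$ is a fixed finite constant.
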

 \begin{proof}
 We will use the following well-known inequality (being just a simple consequence of the H\"older inequality)
  $$\left(\sum_{i=1}^na_i\right)^r\le n^{r-1}\sum_{i=1}^na_i^r,$$
  where $a_i\ge0$.

   Applying first the  last inequality and then the H\"{o}lder inequality with parameters ${r}/{p}$ and $(r/p)'$ (given $q>1$, we denote by $q'$  the conjugate real number such that $1/q+1/q'=1$) we obtain 
 $$  \e{\left(\sum_{i=1}^NX_i^{1/r}\right)^p}
   \le\e{N^{(r-1){p}/{r}}\left(\sum_{i=1}^NX_i\right)^{{p}/{r}}}
   \le\e{N^{\frac{(r-1)p}{r}\left(\frac{r}{p}\right)'}}^{1/(r/p)'}\e{\sum_{i=1}^NX_i}^{p/r}.$$
  Notice that since
  \begin{align*}
   (r-1)\frac pr\left(\frac rp\right)'\le1+\d
  \end{align*}
  in view of our assumptions both expressions above are finite.
 \end{proof}

We prove now a weaker result than  \eqref{eq: granica D} saying that the function $D$ behaves regularly at infinity.
\begin{prop}
\label{prop: regularnosc D}
Assume that hypotheses of Theorem \ref{thm:twierdzenie.asymptotyka} are satisfied. Then for  any $y\in \R$ we have
$$
\lim_{x\to\8}\frac{D(x+y)}{D(x)}=1.
$$
\end{prop}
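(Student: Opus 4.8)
The plan is to exploit the Poisson equation $\e{D(x+Y)}=D(x)+G(x)$ together with the fact that $D$ is bounded (which follows from Proposition \ref{prop:existence} via Lemma \ref{malenie:transformaty}: indeed $1-\phi(t)\le \Gamma(1-\g)\e{R^\g}t^\g$ for $\g<\a$ would only give $o(t^\a)$-type control, so more carefully one uses $\P[R>t]\le Ct^{-\a}$ directly to bound $D(x)=e^{\a x}\e{1-e^{-e^{-x}R}}=e^{\a x}\int_0^\8 e^{-s}\P[R>se^{x}]\,ds \le C$). First I would establish that $G$ is nonnegative and directly Riemann integrable on $\R$. Nonnegativity follows from the elementary inequality $1-\prod u_i - \prod v_i\le \sum(1-u_i)$ for $u_i,v_i\in[0,1]$ applied with $u_i=\phi(e^{-x}A_i)$ and an extra factor $e^{-e^{-x}B}\le 1$; more precisely $1-e^{-e^{-x}B}\prod\phi(e^{-x}A_i)\le \sum_i(1-\phi(e^{-x}A_i)) + (1-e^{-e^{-x}B})$, so one must also control the $B$-term, and here $1-e^{-e^{-x}B}\le \min(1,e^{-x}B)$ so $e^{\a x}\e{1-e^{-e^{-x}B}}\le e^{\a x}\Gamma(1-\a-\d)\e{B^{\a+\d}}e^{-(\a+\d)x}\to 0$, which is summable in the dRi sense. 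For the decay of $G$ as $x\to\pm\infty$: as $x\to-\infty$ each $1-\phi(e^{-x}A_i)\to 1$ on $\{A_i>0\}$ so $G(x)\sim e^{\a x}(\text{something})\to 0$ exponentially; as $x\to+\infty$ one uses $1-\phi(e^{-x}A_i)\le \Gamma(1-\a-\d)\e{R^{\a+\d}\wedge 1}$-type bounds combined with Lemma \ref{zabawa:z:momentami} to get $e^{\a x}\e{\sum_i(1-\phi(e^{-x}A_i))}$ bounded and in fact the difference with the combined term decays. The key point is that $\sum_n G(x+nh)$ type sums converge, i.e. $G\in L^1(\R)$ with the dRi oscillation condition, which should follow by dominating $G$ by an integrable decreasing-on-each-end function using the moment hypotheses \eqref{zalozenia.momenty} and Lemma \ref{zabawa:z:momentami} (with $r$ chosen so that $\a+\d < 1 < r$ and $p$ in the allowed range).

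Next I would iterate the Poisson equation. Writing $U(x)=\sum_{n\ge 0}\e{(\text{stuff})}$ is not quite right since $Y$ is centered and recurrent; instead the correct object is the difference of two renewal-type sums. The standard trick (as in Durrett–Liggett) is: since $\e{D(x+Y)}-D(x)=G(x)$ and $D$ is bounded, $D$ cannot literally be a potential, but the \emph{increments} are. I would consider $H(x)=D(x)-D(x-c)$ for a fixed constant $c$, or better, directly aim to show $D(x+y)-D(x)\to 0$. The cleanest route: let $Y_i$ be i.i.d.\ copies of $Y$, $S_n=\sum_1^n Y_i$, and iterate to get $D(x)=\e{D(x+S_n)} - \sum_{k=0}^{n-1}\e{G(x+S_k)}$. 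Because $\e{Y}=0$ and $Y$ is nonarithmetic with finite exponential moments (Lemma \ref{lemma Y}), the random walk $S_n$ is oscillating and recurrent, and the renewal-theoretic analysis gives that $\sum_{k=0}^{n-1}\e{G(x+S_k)}$ has a limit structure. Since $\sum_{\R} G < \infty$ and $G$ is dRi, the expected occupation sums behave like $\frac{1}{\sigma^2}\int G$ type quantities via the renewal theorem for centered walks; the upshot, following the Durrett–Liggett argument, is that $D(x)$ differs from an \emph{harmonic} function (a function $h$ with $\e{h(x+Y)}=h(x)$) by something controlled, and bounded harmonic functions for a centered random walk on $\R$ with nonarithmetic step are constants — hence $D(x)-D(x+y)\to 0$.

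The cleanest formulation I would actually write: define $g(x)=D(x)-\liminf_{|x|\to\infty}$-type comparison is messy; instead I would directly show the family $\{D(\cdot+t): t\in\R\}$ is relatively compact in the topology of uniform convergence on compacts (Arzelà–Ascoli — need equicontinuity of $D$, which follows since $\phi$ is smooth and $D$ and its derivative are bounded using $\P[R>t]\le Ct^{-\a}$ and $\E[R^\b]<\infty$ for $\b<\a$), pick a subsequential limit $\overline D$ along $x_n\to\infty$, note $\overline D$ satisfies $\e{\overline D(x+Y)}=\overline D(x)$ (the $G$-term vanishes in the limit since $G(x_n+\cdot)\to 0$ uniformly on compacts as $G$ is dRi hence $\to 0$ at $\infty$), so $\overline D$ is a bounded harmonic function for the centered nonarithmetic walk, hence constant by the Choquet–Deny lemma. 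Then $\lim_{x\to\infty} D(x+y)/D(x)=1$ follows provided the constant is positive and the limit is subsequence-independent — but for the \emph{ratio} statement I only need that every subsequential limit is a constant and these constants are all the same, which I would get by a monotonicity or a direct Choquet–Deny argument applied to $\limsup D$ and $\liminf D$ themselves (both are $Y$-harmonic, bounded, hence constant, and one shows they agree).

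The main obstacle, as the authors flag, is the inhomogeneous term: I expect the hard part to be proving that $G$ is dRi (equivalently, integrable with the right oscillation control) uniformly, because $G$ packages together the $N$-fold product $\prod\phi(e^{-x}A_i)$, the sum $\sum(1-\phi(e^{-x}A_i))$, and the factor $e^{-e^{-x}B}$, and one must show the near-cancellation $\sum_i(1-\phi(e^{-x}A_i)) \approx 1-\prod_i\phi(e^{-x}A_i)$ is good enough — the error is of order $\sum_{i<j}(1-\phi(e^{-x}A_i))(1-\phi(e^{-x}A_j))$, which needs a second-moment-type bound $\e{(\sum_i A_i^{\a+\d})^2}<\infty$-flavoured estimate, and this is exactly where Lemma \ref{zabawa:z:momentami} (with $r=(\a+\d)^{-1}\cdot$something $>1$ and the corresponding $p>1$) earns its keep. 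Handling the $B$-contribution to the dRi property near $+\infty$ via $\e{B^{\a+\d}}<\infty$ and near $-\infty$ via the trivial bound, and then invoking Choquet–Deny, should be comparatively routine once the dRi bound for $G$ is in hand.
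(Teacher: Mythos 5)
Your overall architecture (Poisson equation, Arzel\`a--Ascoli compactness of translates, Choquet--Deny for the centered nonarithmetic walk $Y$) is the same Durrett--Liggett scheme the paper uses, but there is a genuine gap at the decisive step. You work with the unnormalized translates $D(x_n+\cdot)$ and feed into the limit only the information $G(x)\to 0$ (via direct Riemann integrability of $G$). That is not enough: at this stage of the argument nothing rules out $D(x)\to 0$, or $D$ oscillating between different levels, since the positivity of $\lim D$ is only obtained much later (Section \ref{subsection3.4}, via the Landau theorem). If $D(x_n)\to 0$ along some subsequence, your subsequential limits are the constant $0$ and give no information whatsoever about the ratio $D(x+y)/D(x)$; and your fallback --- ``apply Choquet--Deny to $\limsup D$ and $\liminf D$ themselves'' --- does not work, because the pointwise $\limsup$/$\liminf$ of solutions of the perturbed equation are not $Y$-harmonic functions (Fatou only yields one-sided inequalities), and even if they were constants there is no mechanism in your argument forcing the constants to coincide; that coincidence is precisely the statement to be proved.

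The paper's way around this is to normalize \emph{before} taking limits: it studies $h_x(y)=D(x+y)/D(x)$, which is pinned at $h_x(0)=1$, satisfies
\begin{equation*}
h_x(y)=\e{h_x(y+Y)}-\frac{G(x+y)}{D(x+y)}\,h_x(y),
\end{equation*}
and enjoys the a priori bounds $h_x(y)\le\max\{e^{\a y},e^{(\a-1)y}\}$ coming from monotonicity of $(1-\phi(t))$ and $(1-\phi(t))/t$. The error term then vanishes provided $G(x)/D(x)\to 0$ --- a \emph{relative} smallness statement, much stronger than $G(x)\to0$, and this is where the real work lies: the paper handles the $B$-contribution by first imposing the auxiliary condition $\big(1-\e{e^{-tB}}\big)/\big(1-\phi(t)\big)\to0$ and later removing it by comparison with $B\wedge 1$ (using $\E[\wt R]=\8$), and it controls the product-versus-sum term for unbounded $A_i$ by truncating at level $M$, applying Lemma \ref{thm:JO} to the truncated solution to get the lower bound $1-\wt\phi(t)\ge C_0t^{\b_M}$, and then invoking Lemma \ref{zabawa:z:momentami} to justify dominated convergence. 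None of this appears in your proposal: your bounds on $G$ (e.g.\ $e^{\a x}\e{1-e^{-e^{-x}B}}\to0$, second-moment-type control of the cancellation) are absolute, not relative to $1-\phi(e^{-x})$, so they cannot substitute for Step 1 of the paper's proof. The dRi property of $G$ that you concentrate on is indeed needed in the paper, but only later (Section \ref{subsection3.2}, for the renewal argument of Proposition \ref{prop:limit}); it does not by itself yield Proposition \ref{prop: regularnosc D}.
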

\begin{proof}
We divide the proof of the proposition into several steps. Assume first additionally that the following condition is satisfied
\begin{align}
  \label{dodatkowe.zalozenie}
  \lim_{t\to 0}\frac{1-\e{e^{-tB}}}{1-\phi(t)}=0
  \end{align}

\medskip

{\bf Step 1. }  First we will show that
\begin{equation}
\label{eq: step1}
\lim_{x\to\8}\frac{G(x)}{D(x)}=0.
\end{equation}

  We can write
  \begin{align}
  \nonumber
   \lim_{x\to\8}\frac{G(x)}{D(x)}&=\lim_{x\to\8}\e{\frac{\prod_{i=1}^N\phi(e^{-x}A_i)(e^{-e^{-x}B}-1)}{1-\phi(e^{-x})}}\\
   \label{drugi:skladnik}
   &+\lim_{x\to\8}\e{\frac{\prod_{i=1}^N\phi(e^{-x}A_i)-1+\sum_{i=1}^N(1-\phi(e^{-x}A_i))}{1-\phi(e^{-x})}}
  \end{align}

  By our assumptions the first term  is equal to 0 since the Laplace transform $\phi$ is bounded by 1.

  In order to show that the second limit \eqref{drugi:skladnik} is zero we will use the following inequality valid for $0\le u_i\le v_i\le1$
  \begin{align}
   \label{DL:nierownosc1}
   \prod_{i=1}^Nu_i-1+\sum_{i=1}^N(1-u_i)\ge\prod_{i=1}^Nv_i-1+\sum_{i=1}^N(1-v_i)\qquad \mbox{(see \cite[(2.5)]{DL})}
  \end{align}
  Next we will deduce that the expression under the expectation in \eqref{drugi:skladnik} is positive.

  In order to bound this limit from above we use the inequality $u\le e^{-(1-u)}$ for $u\in\R$. Therefore, we can write
  \begin{align}
   \label{DL:nierownosc2}
   &\e{\prod_{i=1}^N\phi(e^{-x}A_i)-1+\sum_{i=1}^N(1-\phi(e^{-x}A_i))}\\\nonumber
   &\qquad\le\e{\exp\left(-\sum_{i=1}^N(1-\phi(e^{-x}A_i))\right)-1+\sum_{i=1}^N(1-\phi(e^{-x}A_i))}.
  \end{align}

  Now we will split the problem into two separate cases\\

{\bf Step 1, case i) }
  {\it  There exists constant $M$ such that for any $i$, $A_i\le M$ a.s.} Since  $\phi$ is a Laplace transform of a non-negative random variable, $(1-\phi(u))/u$ is a decreasing function whereas $(1-\phi(u))$ is increasing. Hence
  $$1-\phi(e^{-x}A_i)\le \max(A_i,1)(1-\phi(e^{-x})),$$
  and thus
  $$\sum_{i=1}^N\big[1-\phi(e^{-x}A_i)\big]
  \le(N+\sum_{i=1}^NA_i)(1-\phi(e^{-x})).$$
  Therefore, since  the function $F(u)=e^{-u}-1+u$ is increasing on $[0,\8)$, $F(u)/u$ is bounded
   and tends to 0 as $u\to0$ we can apply the Lebesgue theorem and obtain
   \begin{multline*}
    \limsup_{x\to\8}\frac{\e{\prod_{i=1}^N\phi(e^{-x}A_i)-1+\sum_{i=1}^N(1-\phi(e^{-x}A_i))}}{1-\phi(e^{-x})}\\
    \le\limsup_{x\to\8}\frac{\e{F\left(\sum_{i=1}^N(1-\phi(e^{-x}A_i))\right)}}{1-\phi(e^{-x})}
    \le    \limsup_{t\to0}\frac{\e{F\left(\left(N+\sum_{i=1}^NA_i\right)t\right)}}{t}\\
    \le    \limsup_{t\to0}\e{\frac{F\left(\left(N+\sum_{i=1}^NA_i\right)t\right)}{\left(N+\sum_{i=1}^NA_i\right)t}\cdot\left(N+\sum_{i=1}^NA_i\right)}=0,
   \end{multline*}
   because  $\e{\left(N+\sum_{i=1}^NA_i\right)}<(M+1)\e{N}<\8$.

{\bf Step 1, case ii)} {\it $A_i$'s are unbounded.}
  Take  $M>0$ big enough that will be specified later. Define a truncated random vector
  $(\wt{B}(v),\wt{A_1}(v),\dots \wt{A_N}(v))=(B(v)\wedge M,A_1(v)\wedge M,\dots,A_N(v)\wedge M)$.
  Now take
  $$m_M(t)=\e{\sum_{i=1}^N\wt{A_i}^t}$$ and observe that $m_M(t)=1$ has two different solution $\a_M<\a<\b_M$ and both of them converge to $\a$ as $M\to\8$. Indeed, it follows from the fact that $m_M(\a)<1$ and for $t\in(0,\a+\d)$ different than $\a$ the Lebesgue Theorem gives that $m_M(t)\to m(t)>1$. We will assume that $\b_M<\a+\d$.

  Define
  $$\wt{R}=\sum_{v\in\T}\wt{L}(v)\wt{B}(v),$$
  where $\wt{L}$ is defined in the same way as $L$ but in terms of $\wt{A_i}(v)$. Clearly $\wt{R}\le R$ a.s. Since $M$ can be chosen in such a way that 
  for any $j$ the measure $\pr{\log \wt{A_j}\in du, A_j>0, N\ge j }$ has the same support as $\pr{\log {A_j}\in du, A_j>0, N\ge j }$ we can apply
  Lemma \ref{thm:JO} a) for the random variable $\wt{R}$ and obtain $\P \big[{\wt{R}>t}\big]\sim C_+ t^{-\b_M}$ for some positive $C_+$. Hence, by Tauberian Theorem
  \begin{align}
  \label{asymptotyka:R:tylda}
  1-\wt{\phi}(t)\sim C_+\Gamma(1-\b_M) t^{\b_M},
  \end{align}
  where $\wt{\phi}(t)=\phi_{\wt{R}}(t)\ge\phi(t)$. Therefore we can find $C_0$ such that $1-\wt{\phi}(t)\ge C_0 t^{\b_M}$ for $0<t<1$. On the other hand, by Lemma \ref{malenie:transformaty} and Corollary \ref{cor:moments}, $1-\phi(t)\le C_1 t^{\b}$  for $\b<\a$. This implies that for some $C$ and any $t$
  $$1-\phi(t)\le C \left(1-\wt{\phi}(t)\right)^{\b/\b_M}.$$
  From \eqref{asymptotyka:R:tylda} get
  \begin{align*}
   \lim_{t\to0}\frac{1-\wt{\phi}(ts)}{1-\wt{\phi}(t)}= s^{\b_M},
  \end{align*}
   hence for any  $0<\eps<\a\d/2$ we may find $C_1$ such that
  \begin{align*}
  \frac{1-\wt{\phi}(st)}{1-\wt{\phi}(t)}\le C_1 s^{\b_M+\eps},
  \end{align*}
  for any $t>0$ and $s>1$. Therefore, we have
  \begin{align*}
   1-\phi(ts)&\le C \left(1-\wt{\phi}(ts))\right)^{\b/\b_M}\\
   &\le C\left(C_1\left(1-\wt{\phi}(t)\right)s^{\b_M+\eps}\right)^{\b/\b_M}\\
   &\le C_2\left(1-\wt{\phi}(t)\right)^{\b/\b_M}(s^{(\b_M+\eps)\frac{\b}{\b_M}}).
  \end{align*}
  Thus, since the function $F$ is increasing, $F(u)/u^{\frac{\b_M}{\b}}$ is bounded for $\b_M\le 2\b$ and $\phi(t)\le \wt \phi(t)$ we have
  {\allowdisplaybreaks\begin{align*}
   \frac{{F\left(\sum_{i=1}^N\left(1-\phi(e^{-x}A_i)\right)\right)}}{1-\phi(e^{-x})}
   &\le
   \frac{{F\left(\sum_{i=1}^N\left(1-\phi\big(e^{-x}(1\vee A_i)\big)\right)\right)}}{1-\phi(e^{-x})}\\
&\le \frac{{F\left(\left( C_2\sum_{i=1}^N(1\vee A_i^{(\b_M+\eps)\frac{\b}{\b_M}})\right)\left(1-\wt{\phi}(e^{-x})\right)^{\b/\b_M}\right)}}{1-\wt{\phi}(e^{-x})}\\
   &\!\le
  C\bigg( \sum_{i=1}^N 1\vee A_i^{(\b_M+\eps)\frac{\b}{\b_M}} \bigg)^\frac{\b_M}{\b} \\
   &\!\le
  C\bigg( N^{\frac{\b_M}{\b}} + \bigg(   \sum_{i=1}^N  A_i^{(\b_M+\eps)\frac{\b}{\b_M}} \bigg)^\frac{\b_M}{\b}\bigg).
   \end{align*}}
   Now, applying Lemma \ref{zabawa:z:momentami} with $X_i = A_i^\a$, $r = \frac{(\a+\d) \b_M}{\b(\b_M+\eps)}$ and $p=\frac{\b_M}{\b}$
(one can choose sufficiently large $M$ and $\b_M$ close to $\b$ 
such that the pair $r,p$ satisfies asumptions of Lemma \ref{zabawa:z:momentami}) we obtain
$$
\e{\frac{{F\left(\sum_{i=1}^N\left(1-\phi(e^{-x}A_i)\right)\right)}}{1-\phi(e^{-x})}
   } \le C \bigg( \E [N^{1+\d}] + \E\bigg[  \sum_{i=1}^N A_i^{\a+\d}   \bigg]
   \bigg)
$$
  Finally, by Lebesgue Theorem and since $F(u)/u\to0$ as $u\to0$ we deduce  that
  \begin{align*}
   \lim_{x\to\8}\frac{\e{F\left(\sum_{i=1}^N\left(1-\phi(e^{-x}A_i)\right)\right)}}{1-\phi(e^{-x})}=
   \e{\lim_{x\to\8}\frac{F\left(\sum_{i=1}^N\left(1-\phi(e^{-x}A_i)\right)\right)}{1-\phi(e^{-x})}}=0.
  \end{align*}
   \medskip

   {\bf Step 2.} Let us introduce a family of functions $$h_x(y)=\frac{D(x+y)}{D(x)}.$$ Dividing the equation
   $$D(x+y)=\e{D(x+y+Y)}-G(x+y)$$ by  $D(x)$, we obtain
   \begin{align}
    \label{eq:rownanie.h}
    h_x(y)=\e{h_x(y+Y)}-\frac{G(x+y)}{D(x+y)}h_x(y).
   \end{align}
   Since $D(y)e^{-\a y}=1-\phi(e^{-y})$ is decreasing and $D(y)e^{(1-\a) y}=(1-\phi(e^{-y}))/e^{-y}$ is increasing the same holds for functions $h_x$. Therefore we  conclude that $h_x(y)\le\max\{e^{\a y},e^{(\a-1)y}\}$   and that $h_x$ are equi-continuous  on bounded sets.
   By Arzel\`a-Ascoli theorem  the set $\{h_x\}$ is relatively compact in the topology of uniform convergence on compact sets.
   Take now an accumulation point $h$ as $x\to\8$ i.e. $h_{x_n}\to h$ for some sequence $x_n\to\8$. Passing to infinity with $x_n$ in \eqref{eq:rownanie.h} from Step 1 and the Lebesgue theorem we have
   \[
   h(y)=\e{h(y+Y)}.
   \]
   Since any positive $Y$-harmonic function is constant it yields that $h(y)=h(0)=1.$
   Hence, $h$ is the unique accumulation point and therefore $D(x+y)/D(x)\to1.$

\medskip

{\bf Step 3.} Finally we get rid of the additional assumption \eqref{dodatkowe.zalozenie}.
We  define $\tilde{B}=\min\{1,B\}$ and consider
$$
\wt R = \sum_{v\in\T}L(v)\tilde B(v),
$$ i.e. $\wt R$ is defined in the same way as $R$ in \eqref{rownanie:galazkowe} but with  $B(v)$ replaced by $\tilde B(v)$. Of course $\wt R \le R$, hence $\wt R$  is also finite a.s. and  solves the equation
  \begin{align}
   \tilde{X}=_d\sum_{i=1}^NA_i\tilde{X_i}+\tilde{B},
  \end{align}
  with the vector $(\tilde B ,A_1, A_2, \dots)$ satisfying hypotheses \eqref{istnieje:alpha}-\eqref{zalozenia.momenty}.

Let $\tilde{\phi}=\phi_{\tilde{R}}$ be  the Laplace transform of $\tilde R$. Notice that \eqref{dodatkowe.zalozenie} is satisfied for $R$ and $B$ replaced by $\wt R$ and $\wt B$, i.e.
  \begin{align}
  \label{dodatkowe.zalozenie2}
  \lim_{t\to 0}\frac{1-\e{e^{-t\wt B}}}{1-\wt\phi(t)}=0.
  \end{align}
Indeed,  observe first  that  $\E[\tilde R]=\8$, otherwise we would have
  $$\E[\tilde R]=\e{\sum_{i=1}^N{A_i}}\E[\tilde R]+\E[\tilde{B}]$$ which is impossible since $1<\e{\sum_{i=1}^NA_i}\le\8$. Thus in consequence  $1-\tilde{\phi}(t)/t$ tends to infinity as  $t$ goes to 0. Moreover, since $\wt B$ is bounded a.s., by the Lebesgue theorem and by Lemma \ref{malenie:transformaty} we have
  \begin{align*}
   \lim_{t\to0}\frac{1-\e{e^{-t\tilde{B}}}}{1-\tilde{\phi}(t)}=
   \lim_{t\to0}\e{\frac{1-{e^{-t\tilde{B}}}}{t}}\cdot\frac{t}{1-\tilde{\phi}(t)}=
   \E[\tilde B]\cdot\lim_{t\to0} \frac{t}{1-\tilde{\phi}(t)} =0,
  \end{align*}
  which proves  \eqref{dodatkowe.zalozenie2}.
  Therefore, we may use the results proved in the first two steps of the proof saying that
  $$
  \lim_{x\to\8} \frac{e^{\a (x+y)}(1-\wt \phi(e^{-(x+y)}))}{e^{\a x }(1-\wt \phi(e^{-x}))} = 1
  $$

Hence
$$
   \frac{1-\tilde{\phi}(t)}{t^{\alpha}}=L(1/t),
$$
  for some slowly varying function $L$.

Since $\tilde R\le R$ and $\tilde \phi \ge \phi$,  for $0<\eps<\d$ we have
  \begin{align*}
\lim_{t\to 0}   \frac{t^{\alpha+\eps}}{1-{\phi}(t)} \le
\lim_{t\to 0}\frac{t^{\alpha+\eps}}{1-\tilde{\phi}(t)} = \lim_{t\to 0}\frac{t^{\eps}}{L(1/t)} = 0.
  \end{align*}
 By Lemma \ref{malenie:transformaty}
  \begin{align*}
   \frac{1-\e{e^{-tB}}}{t^{\alpha+\eps}}\le C<\8
  \end{align*}
Finally, we get
  \begin{align*}
   \lim_{t\to0}\frac{1-\e{e^{-tB}}}{1-\phi(t)}=\lim_{t\to0}\frac{1-\e{e^{-tB}}}{t^{\a+\eps}}\cdot\frac{t^{\a+\eps}}{1-\phi(t)}=0.
  \end{align*}
 \end{proof}

The last Proposition implies immediately  the following results
 \begin{cor}
  \label{cor:twierdzenie.regularnosc}
  Under  assumptions of Theorem \ref{thm:twierdzenie.asymptotyka} we have the following: for any $s>0$
  \begin{align}
   \lim_{t\to0}\frac{1-\phi(ts)}{1-\phi(t)}=s^{\a}.
  \end{align}
 In particular, the function $L(t)=(1-\phi(1/t))t^{\a}$ is slowly varying.

 \end{cor}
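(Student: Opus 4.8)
The plan is to observe that the Corollary is nothing but Proposition \ref{prop: regularnosc D} transported through the logarithmic change of variables $t=e^{-x}$, $s=e^{-y}$. Recall that by definition $D(x)=e^{\a x}(1-\phi(e^{-x}))$, equivalently $1-\phi(e^{-x})=e^{-\a x}D(x)$, and note that with $t=e^{-x}$ one has $D(x)=L(e^{x})$, where $L(t)=(1-\phi(1/t))t^{\a}$ is the function in the statement.

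First I would fix $s>0$ and write $s=e^{-y}$ with $y=-\log s\in\R$. Then for $t=e^{-x}$ we have $ts=e^{-(x+y)}$, so
$$
\frac{1-\phi(ts)}{1-\phi(t)}=\frac{e^{-\a(x+y)}D(x+y)}{e^{-\a x}D(x)}=e^{-\a y}\,\frac{D(x+y)}{D(x)}=s^{\a}\,\frac{D(x+y)}{D(x)}.
$$
As $t\to 0$ we have $x\to\8$, and Proposition \ref{prop: regularnosc D} applied with this fixed $y$ gives $D(x+y)/D(x)\to 1$; hence $\lim_{t\to 0}\frac{1-\phi(ts)}{1-\phi(t)}=s^{\a}$, which is the first assertion.

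For the second assertion I would simply feed this back into the definition of $L$: for any $\lam>0$,
$$
\frac{L(\lam t)}{L(t)}=\frac{\big(1-\phi(1/(\lam t))\big)(\lam t)^{\a}}{\big(1-\phi(1/t)\big)t^{\a}}=\lam^{\a}\cdot\frac{1-\phi\big(\tfrac1t\cdot\tfrac1\lam\big)}{1-\phi\big(\tfrac1t\big)},
$$
and letting $t\to\8$ (so that $u:=1/t\to 0$) the first assertion applied with $s=1/\lam$ shows the last ratio tends to $\lam^{-\a}$, whence $L(\lam t)/L(t)\to 1$; thus $L$ is slowly varying. There is no real obstacle here beyond bookkeeping: all the content sits in Proposition \ref{prop: regularnosc D}, and the only point to keep in mind is that the convergence there is needed (and available) for each individual value of the translation parameter $y$, which is precisely what the substitution produces.
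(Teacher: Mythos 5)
Your proof is correct and is exactly the argument the paper intends: the Corollary is stated as an immediate consequence of Proposition \ref{prop: regularnosc D}, and your change of variables $t=e^{-x}$, $s=e^{-y}$ together with the identity $1-\phi(ts)=s^{\a}e^{-\a x}D(x+y)$ is precisely how it follows, including the routine verification that $L$ is slowly varying.
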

\begin{corollary}
  \label{momenty:R}
  Under assumptions \eqref{istnieje:alpha}-\eqref{zalozenia.momenty} we have $\e{R^{\b}}<\8$ for $\b<\a$ and $\e{R^{\b}}=\8$ for $\b>\a$.
 \end{corollary}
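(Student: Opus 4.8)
The plan splits along the two assertions. The first one, $\e{R^{\b}}<\8$ for $\b<\a$, needs nothing new: it is exactly Corollary \ref{cor:moments}, which in turn follows at once from the bound $\P[R>t]\le Ct^{-\a}$ of Proposition \ref{prop:existence} together with $\e{R^{\b}}=\b\int_0^{\8}t^{\b-1}\P[R>t]\,dt$. So I would simply quote it and concentrate on the second assertion, $\e{R^{\b}}=\8$ for $\b>\a$. The strategy there is to feed the regular variation of $1-\phi$ at the origin — already established in Proposition \ref{prop: regularnosc D} and Corollary \ref{cor:twierdzenie.regularnosc} — into the elementary moment estimate of Lemma \ref{malenie:transformaty}, and argue by contradiction.

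Concretely, suppose $\e{R^{\b}}<\8$ for some $\b>\a$. Since $\a<1$ one may pick $\b'\in\big(\a,\min\{\b,1\}\big)$, and then $\e{R^{\b'}}\le 1+\e{R^{\b}}<\8$ because $R^{\b'}\le 1+R^{\b}$. Lemma \ref{malenie:transformaty} with $\g=\b'$ gives $1-\phi(t)\le \Gamma(1-\b')\,\e{R^{\b'}}\,t^{\b'}$ for every $t>0$. On the other hand, by Corollary \ref{cor:twierdzenie.regularnosc} the function $L(u)=u^{\a}\big(1-\phi(1/u)\big)$ is slowly varying, and it is eventually positive since $\P[B>0]>0$ forces $R\not\equiv0$, hence $\phi(t)<1$ for all $t>0$. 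Writing $u=1/t$ and comparing the two bounds yields $L(u)\le C\,u^{-(\b'-\a)}$ for all large $u$, with $\b'-\a>0$. This is impossible for a positive slowly varying function: for any such $L$ and any $\eps>0$ one has $u^{\eps}L(u)\to\8$ (a standard consequence of the Karamata representation theorem), so taking $\eps=(\b'-\a)/2$ would make $u^{\eps}L(u)$ both diverge and satisfy $u^{\eps}L(u)\le C\,u^{-(\b'-\a)/2}\to0$. Hence $\e{R^{\b}}=\8$ for all $\b>\a$.

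Two equivalent routes could replace the contradiction step if one prefers a direct computation. The quantitative one: for $0<\b'<1$, $\e{R^{\b'}}=\tfrac{\b'}{\Gamma(1-\b')}\int_0^{\8}t^{-1-\b'}\big(1-\phi(t)\big)\,dt$; the integral converges at $\8$ (since $\b'>0$) but near $0$, after the substitution $u=1/t$, it equals $\int_1^{\8}u^{\b'-\a-1}L(u)\,du=\8$ because $\b'-\a-1>-1$ and $L$ is slowly varying and positive, and one then passes from $\b'\in(\a,1)$ to arbitrary $\b>\a$ by monotonicity of $s\mapsto s^{\b}$ on $[1,\8)$. Alternatively one can invoke the Tauberian theorem (Lemma \ref{thm:tauberowskie}) to turn the regular variation of $\phi$ at $0$ into $\P[R>t]\sim c\,t^{-\a}L(t)$ and integrate $\b\int t^{\b-1}\P[R>t]\,dt$ directly.

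I do not expect a genuine obstacle in this corollary: all the substance has already been absorbed into Proposition \ref{prop: regularnosc D} and Corollary \ref{cor:twierdzenie.regularnosc}, and what remains is merely the standard dictionary between regular variation of a Laplace transform at the origin and the range of finite fractional moments. The one point that requires a word of care is the non-degeneracy of $R$ — i.e. that the slowly varying function $L$ is not eventually zero and really does decay slower than any power — which is ensured by the standing assumption $\P[B>0]>0$.
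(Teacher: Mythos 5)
Your argument is correct, and it rests on the same two ingredients as the paper's own proof --- Proposition \ref{prop:existence} (equivalently Corollary \ref{cor:moments}) for $\b<\a$, and the regular variation established in Corollary \ref{cor:twierdzenie.regularnosc} for $\b>\a$ --- but your primary route for the second half is packaged differently. The paper's proof is a one-line Karamata-type computation on the tail side: it converts the slow variation of $t^{\a}(1-\phi(1/t))$ into $\pr{R>t}=t^{-\a}L(t)$ via the Tauberian theorem (Lemma \ref{thm:tauberowskie}) and then notes that $\b\int_1^{\8}t^{\b-1-\a}L(t)\,dt$ is finite exactly when $\b<\a$ and infinite when $\b>\a$; this is your third listed alternative. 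You instead stay on the Laplace-transform side and argue by contradiction: if $\e{R^{\b'}}<\8$ for some $\b'\in(\a,\min\{\b,1\})$, then Lemma \ref{malenie:transformaty} gives $1-\phi(t)\le Ct^{\b'}$, i.e. $L(u)\le Cu^{-(\b'-\a)}$, which is incompatible with $u^{\eps}L(u)\to\8$ for a positive slowly varying $L$. Both arguments are sound and of comparable depth; yours avoids re-invoking the Tauberian theorem in this corollary and makes explicit the (tacit in the paper) non-degeneracy point that $\phi(t)<1$ for all $t>0$, while the paper's version is shorter and settles both directions with a single integral formula.
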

 \begin{proof}
  We have
  $$\e{R^{\b}}=\b\int_0^{\8}t^{\b-1}\pr{R>t}dt=C_0+\b\int_1^{\8}t^{\b-1-\a}L(t)dt$$
  what is finite if $\b<\a$ and infinite if $\b>\a$.
 \end{proof}


\subsection{Some properties of the function $G$}
 \label{subsection3.2}

To prove our main results we will use the renewal theorem, therefore before we will proceed with the final arguments we have to prove some properties of the function $G$.


 \begin{lemma}
  \label{lemat:dRi}
  There exists $\eps>0$ such that the function $e^{\eps|x|}G(x)$ is directly Riemman integrable.
 \end{lemma}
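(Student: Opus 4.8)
The idea is to bound $|G(x)|$ separately for $x\to+\infty$ and $x\to-\infty$ by a quantity decaying exponentially, and then invoke the standard sufficient condition that a function which is continuous (or piecewise continuous) and dominated by an integrable decreasing function is directly Riemann integrable. Recall
$$
G(x)=e^{\a x}\e{\sum_{i=1}^N\bigl(1-\phi(e^{-x}A_i)\bigr)-\Bigl(1-e^{-e^{-x}B}\textstyle\prod_{i=1}^N\phi(e^{-x}A_i)\Bigr)}.
$$
Continuity of $G$ is clear since $\phi$ is continuous and the series defining $R$ converges, so the only issue is the tail decay, and it suffices to produce a bound $|G(x)|\le C e^{-\eps'|x|}$ for some $\eps'>\eps>0$ (then $e^{\eps|x|}G(x)$ is bounded by $Ce^{-(\eps'-\eps)|x|}$, which is decreasing on each half-line and integrable, hence dRi).

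\textbf{The regime $x\to-\infty$ (i.e. $t=e^{-x}\to\infty$).} Here one uses that $1-\phi(u)\le 1$ always, so that the second group of terms, $1-e^{-e^{-x}B}\prod\phi(e^{-x}A_i)$, is at most $1$, while $\sum_{i=1}^N(1-\phi(e^{-x}A_i))\le N$. Thus $|G(x)|\le e^{\a x}\e{N+1}$, and since $\e{N^{1+\d}}<\infty$ gives $\e{N}<\infty$, we get $|G(x)|\le C e^{\a x}$, i.e.\ exponential decay as $x\to-\infty$ with rate $\a>0$. Choosing $\eps<\a$ handles this half-line.

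\textbf{The regime $x\to+\infty$ (i.e. $t=e^{-x}\to 0$), the main obstacle.} This is where the tangency/criticality is felt and where the real work lies. Write $u_i=\phi(e^{-x}A_i)$, $b=e^{-e^{-x}B}$, all in $[0,1]$. Algebraically,
$$
\sum_{i=1}^N(1-u_i)-\Bigl(1-b\textstyle\prod_i u_i\Bigr)=\underbrace{\sum_i(1-u_i)-\Bigl(1-\prod_i u_i\Bigr)}_{\ge 0\text{ by }\eqref{DL:nierownosc1}}+\ \bigl(\textstyle\prod_i u_i\bigr)(b-1),
$$
so $G(x)=e^{\a x}\e{\bigl[\sum_i(1-u_i)-(1-\prod_i u_i)\bigr]} - e^{\a x}\e{(\prod_i u_i)(1-b)}$, and I bound the two pieces. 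For the second piece, $(\prod_i u_i)(1-b)\le 1-e^{-e^{-x}B}\le e^{-x}B$, while from the proof of Proposition~\ref{prop: regularnosc D} (Step 3 / Lemma~\ref{malenie:transformaty}) we also have the sharper $1-e^{-e^{-x}B}\le C e^{-(\a+\eps_0)x}$ for some $\eps_0\in(0,\d)$; either way $e^{\a x}\e{(\prod u_i)(1-b)}\le C e^{-\eps_0 x}$, exponentially small. For the first piece, use $1-\prod_i u_i\ge \sum_i(1-u_i)-\sum_{i<j}(1-u_i)(1-u_j)$ (inclusion–exclusion, all terms in $[0,1]$), hence $0\le \sum_i(1-u_i)-(1-\prod_i u_i)\le \tfrac12\bigl(\sum_i(1-u_i)\bigr)^2$. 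Now by Lemma~\ref{malenie:transformaty} with exponent $\b/\b_M$-type choices — or more simply, using $1-\phi(e^{-x}A_i)\le \Gamma(1-\g)\e{R^\g}(e^{-x}A_i)^\g$ for a $\g\in(\a,\a+\d/2)$ with $\g>\a/2$ so that $2\g>\a$, which is legitimate because by Corollary~\ref{momenty:R} $\e{R^\g}<\infty$ for $\g<\a$; to get an exponent above $\a/2$ one uses instead the truncation device of Step~1 case~ii) to obtain $1-\phi(t)\le C t^{\a+\eta}$ along a subsequence of exponents $>\a/2$ — we get $\bigl(\sum_i(1-u_i)\bigr)^2\le C\,\bigl(\sum_i A_i^{\g}\bigr)^2 e^{-2\g x}$, so
$$
e^{\a x}\e{\Bigl[\textstyle\sum_i(1-u_i)-(1-\prod_i u_i)\Bigr]}\le C e^{-(2\g-\a)x}\,\e{\Bigl(\textstyle\sum_{i=1}^N A_i^{\g}\Bigr)^{2}}.
$$
The remaining point is finiteness of $\e{(\sum_{i=1}^N A_i^{\g})^2}$: this is exactly the content of Lemma~\ref{zabawa:z:momentami}, applied with $X_i=A_i^{\a+\d}$-scaled appropriately (take $r$ so that $1/r\cdot(\a+\d)=\g$, i.e.\ $r=(\a+\d)/\g>1$, and $p=2$; since $\g$ can be taken close enough to $\a$ and hence $r$ close enough to $(\a+\d)/\a$, one checks $p=2<\tfrac{r(1+\d)}{r+\d}$ by taking $\d$'s role carefully, or simply shrinks $\d$ at the outset), giving $\e{(\sum A_i^\g)^2}\le C\,\e{\sum A_i^{\a+\d}}<\infty$ by \eqref{zalozenia.momenty}. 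Combining, $|G(x)|\le C e^{-\eps' x}$ as $x\to+\infty$ with $\eps'=\min\{2\g-\a,\eps_0\}>0$.

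\textbf{Conclusion.} Pick $0<\eps<\min\{\a,\ 2\g-\a,\ \eps_0\}$. Then $e^{\eps|x|}|G(x)|\le C e^{-(\a-\eps)|x|}$ for $x\le 0$ and $\le C e^{-(\eps'-\eps)|x|}$ for $x\ge 0$; in particular $e^{\eps|x|}G(x)$ is bounded, continuous, and dominated on $\R$ by a nonincreasing integrable envelope, hence it is directly Riemann integrable (this verifies the criterion \eqref{dri}, since a continuous function dominated by a monotone integrable function satisfies it). The only genuinely delicate point is the quadratic estimate for the first piece together with the second-moment bound $\e{(\sum A_i^\g)^2}<\infty$, i.e.\ squeezing the exponent $\g$ strictly above $\a/2$ while keeping it below $\a$ (so that $\e{R^\g}<\infty$) and simultaneously arranging the parameters of Lemma~\ref{zabawa:z:momentami}; this is where one must use the truncation argument of Step~1 case~ii) rather than a naive application of Lemma~\ref{malenie:transformaty}, exactly as in Proposition~\ref{prop: regularnosc D}.
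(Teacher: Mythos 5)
Your overall architecture is legitimate and genuinely different in shape from the paper's: you aim for a pointwise exponential bound $|G(x)|\le Ce^{-\eps'|x|}$ on each half-line and then invoke continuity plus domination by a dRi envelope, whereas the paper never proves pointwise decay at $+\8$; it splits $|G|\le f_1+f_2$, proves integrability of $e^{\pm\eps x}f_1$ by integrating in $x$ first, and gets property \eqref{dri} from the near-monotonicity of $f_1$ forced by \eqref{DL:nierownosc1}. Your treatment of $x\to-\8$ and of the $B$-term (via Lemma \ref{malenie:transformaty} with exponent $\a+\eps_0<\a+\d<1$) is fine, and the Bonferroni bound $\sum_i(1-u_i)-(1-\prod_iu_i)\le\tfrac12\bigl(\sum_i(1-u_i)\bigr)^2$ is correct. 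The gap is the key quantitative step at $x\to+\8$: you need $\e{\bigl(\sum_{i=1}^NA_i^{\g}\bigr)^{2}}<\8$, and this neither follows from Lemma \ref{zabawa:z:momentami} nor from the hypotheses. The lemma only allows $p<\frac{r(1+\d)}{r+\d}$, and since $r>1$ and $\d<1-\a<1$ this threshold is $<1+\d<2$, so $p=2$ is never admissible; shrinking $\d$ only shrinks the admissible range further. Worse, the moment itself can be infinite under \eqref{zalozenia.momenty}: take $A_i$ i.i.d.\ and independent of $N$ with $\E[N^{1+\d}]<\8$ but $\E[N^2]=\8$ (criticality is easily arranged through the law of $A$); then $\e{\bigl(\sum_{i\le N}A_i^\g\bigr)^2}\ge(\E[A^\g])^2\,\E[N(N-1)]=\8$. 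Two side claims are also wrong: choosing $\g\in(\a,\a+\d/2)$ is incompatible with $\e{R^\g}<\8$ (Corollary \ref{momenty:R}), and no truncation can yield $1-\phi(t)\le Ct^{\a+\eta}$ with $\eta>0$, since ultimately $1-\phi(t)\sim C_+t^{\a}$ with $C_+>0$; in any case you never need exponents above $\a$, because any $\g\in(\a/2,\a)$ already gives $2\g>\a$, so the ``delicate point'' you flag at the end is a non-issue while the real obstruction (the second moment of $\sum_iA_i^\g$) is left unresolved.

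The repair is to lower the power from $2$ to an exponent $p$ only slightly above $1$, which is exactly the lever the paper uses. Since $e^{-u}-1+u\le\min\{u,u^2/2\}\le u^{p}$ for all $u\ge0$ and $1\le p\le 2$, bound
$e^{\a x}\e{F\bigl(\sum_i(1-\phi(e^{-x}A_i))\bigr)}\le C\,e^{(\a-\b p)x}\,\e{\bigl(\sum_{i=1}^NA_i^{\b}\bigr)^{p}}$
with $\b<\a$ (so that $\e{R^\b}<\8$ and Lemma \ref{malenie:transformaty} applies) and $p$ chosen with $\a/\b<p<\frac{r(1+\d)}{r+\d}$, $r=(\a+\d)/\b$; taking $\b$ close enough to $\a$ such a $p$ exists, Lemma \ref{zabawa:z:momentami} gives finiteness of the moment, and $\b p>\a$ gives the exponential decay you wanted at $+\8$. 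With that substitution your continuity-plus-envelope route to \eqref{dri} goes through and is a valid alternative to the paper's monotonicity argument; as written, however, the second-moment step is a genuine gap.
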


 \begin{proof}

  For any $x$ and $\eps<\min\{\d,1-\a\}$ we have
  \begin{align*}
   e^{\pm \eps x}|G(x)|&\le e^{(\a\pm\eps) x}\e{\left|\sum_{i=1}^N (1-\phi(e^{-x} A_i))-1+\prod_{i=1}^N \phi\left(e^{-x}A_i\right)\right|}\\&\qquad+e^{(\a\pm\eps) x}\e{\left|\left(e^{-e^{-x}B}-1\right)\right|\prod_{i=1}^N \phi(e^{-x}A_i)}\\&
   =e^{(\a\pm\eps) x}\e{\sum_{i=1}^N (1-\phi(e^{-x} A_i))-1+\prod_{i=1}^N \phi\left(e^{-x}A_i\right)}\\&\qquad+e^{(\a\pm\eps) x}\e{\left(1-e^{-e^{-x}B}\right)\prod_{i=1}^N \phi(e^{-x}A_i)}\\&=f_1(x)+f_2(x),
  \end{align*}
  Notice first that since for $\g<\a+\eps$, by  Lemma \eqref{malenie:transformaty} we have
  $$ e^{\g x}\e{(1-e^{-e^{-x}B})\prod_{i=1}^N \phi(e^{-x}A_i)} \le e^{\g x}\left(1-\e{e^{-e^{-x}B}}\right)
   \le C\min\left\{e^{\g x},e^{(\g-\a-\d)x}\right\},
  $$
the function $f_2$ is directly Riemman integrable.

\medskip

  Let us now examine the  function $f_1$. First, we will prove that $f_1$ is integrable. For this purpose notice that if
   $\g>0$ we have
  \begin{align*}
   &\int_{\R}e^{\g x}\e{\sum_{i=1}^N (1-\phi(e^{-x} A_i))-1+\prod_{i=1}^N \phi(e^{-x}A_i)}dx\\
   &\qquad\qquad\le\int_{\R}e^{\g x}\e{\sum_{i=1}^N (1-\phi(e^{-x} A_i))-1+e^{-\sum_{i=1}^N (1-\phi(e^{-x}A_i))}}dx\\
   &\qquad\qquad\le\e{\int_{\R}e^{\g x}F\left(\sum_{i=1}^N (1-\phi(e^{-x} A_i))\right)dx}.
  \end{align*}
  The above expression is finite for  $\g<\a+\eps$. Indeed,
  from the monotonicity of $F$ on the positive half line and from Lemma \ref{malenie:transformaty}, for $\b<\a$  we can bound the last integral by
  \begin{align*}
   &\e{\int_{\R}e^{\g x}F\left(C\sum_{i=1}^N A_i^{\b}\e{R^{\b}}e^{-\b x} \right)dx}\\
   &\qquad\qquad=\e{\left(C\e{R^{\b}}\sum_{i=1}^N A_i^{\b}\right)^{\g/\b}}\times\int_{\R}e^{\g x}F(e^{-\b x} )dx.
  \end{align*}
   To see that the expression above is  finite we apply Lemma \ref{zabawa:z:momentami} with $r=(\a+\d)/\a$ and $X_i=A_i^{r\b}$. The second term is finite since $F(t)\le \min\{t,t^2/2\}$ and $\b<\g<2\b$. Thus $f_1$ is integrable.

   \medskip

  Next we have to check that $f_1$ satisfies \eqref{dri}.
  Inequality \eqref{DL:nierownosc1} implies that $e^{-(\a\pm\eps)x}f_1(x)$ is decreasing, hence also $e^{-x}f_1(x)$, since $\a+\eps<1$. Therefore, for any  $h>0$ and $x\in I_n(h)$ we have
    $$f_1(x)\le e^{x-nh} f_1(nh)\le e^hf_1(nh)$$
    and similarly we can estimate from below
  $$f_1(x)\ge e^{x-(n+1)h}f_1((n+1)h) \ge e^{-h}f_1((n+1)h).$$
  Since $f_1$ is integrable, the series
  $$\sum_{n\in\Z}f_1(nh)=\sum_{n\in\Z}f_1((n+1)h)\le\sum_{n\in\Z}\frac{1}{h}\int_{I_n(h)}e^hf_1(x)dx=\frac{e^h}{h}\int_{\R} f_1(x)dx<\8$$
  is finite. We can write
  \begin{align*}
   \sum_{n\in\Z}\sup_{x,y\in I_n(h)}&|f_1(x)-f_1(y)|\cdot h\le\sum_{n\in\Z}\left(e^{h}f_1(nh)-e^{-h}f_1((n+1)h)\right)\cdot h\\
   &\le\sum_{n\in\Z}\left({e^{h}-e^{-h}}\right)f_1(nh)\cdot h
   \le\sum_{n\in\Z}\left({e^{h}-e^{-h}}\right)\int_{I_n(h)}f_1(x)e^hdx\\
   &=(e^{2h}-1)\int_{\R} f_1(x)dx.
  \end{align*}
  The last expression converges to 0 as $h$ goes to 0, thus $f_1$ is directly Riemann integrable.


 \end{proof}
  \begin{corollary}
  \label{wniosek:1}
  Functions $xG(x)$ and $G(x)$ are directly Riemman integrable.
 \end{corollary}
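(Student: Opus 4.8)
The plan is to derive Corollary~\ref{wniosek:1} directly from Lemma~\ref{lemat:dRi}. By that lemma there is an $\eps>0$ such that the function $x\mapsto e^{\eps|x|}G(x)$ is directly Riemann integrable; I want to pass from this to $G$ itself and to $xG$ by observing that these are obtained from $e^{\eps|x|}G(x)$ through multiplication by the bounded continuous factors $e^{-\eps|x|}$ and $x\,e^{-\eps|x|}$, which are bounded by $1$ and by $1/(\eps e)$ respectively, so that $|G(x)|\le e^{\eps|x|}|G(x)|$ and $|xG(x)|\le\tfrac1{\eps e}\,e^{\eps|x|}|G(x)|$.

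The key (elementary) ingredient is the standard fact that if a function $f$ on $\R$ is almost everywhere continuous and $|f(x)|\le g(x)$ for all $x$ with $g$ directly Riemann integrable, then $f$ is directly Riemann integrable. Indeed, $\sum_n\sup_{I_n(h)}|f|\le\sum_n\sup_{I_n(h)}g<\8$ for every $h$, so the upper sums of $|f|$ are finite; and to see that the difference of upper and lower sums of $f$ tends to $0$ as $h\to0$ one splits $\Z$ into the central block $|n|\le T/h$ and the tail $|n|>T/h$: on the central block one uses ordinary Riemann integrability of $f$ on $[-T,T]$ (a consequence of a.e.\ continuity and boundedness), on the tail one bounds $\mathrm{osc}_{I_n(h)}(f)\le\sup_{I_n(h)}g$ and invokes direct Riemann integrability of $g$, and finally one lets $T\to\8$.

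It then remains only to note that $G$ is continuous, so that $G$ and $xG$ are a.e.\ continuous. For fixed $\omega$ the integrand $\sum_{i=1}^N(1-\phi(e^{-x}A_i))-\bigl(1-e^{-e^{-x}B}\prod_{i=1}^N\phi(e^{-x}A_i)\bigr)$ is continuous in $x$, and on every compact $x$-interval it is dominated by an integrable random variable by Lemma~\ref{malenie:transformaty} and Lemma~\ref{zabawa:z:momentami}, exactly as in the proof of Lemma~\ref{lemat:dRi}; hence continuity of $G$ follows from the dominated convergence theorem. Combining this with the displayed domination by $\tfrac1{\eps e}\,e^{\eps|x|}|G(x)|$, which is directly Riemann integrable by Lemma~\ref{lemat:dRi}, the fact above yields that both $G$ and $xG$ are directly Riemann integrable. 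There is no genuine obstacle here; the only points worth a line are the continuity of $G$ and the verification of the elementary domination principle.
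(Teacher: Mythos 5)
Your proof is correct and follows essentially the same route as the paper: the paper's one-line argument is precisely the domination $(1+|x|)|G(x)|\le C\bigl(e^{-\eps_0 x}+e^{\eps_0 x}\bigr)|G(x)|$ together with Lemma~\ref{lemat:dRi}, which is the same bound you use (with the explicit constant $1/(\eps e)$ for $|x|e^{-\eps|x|}$). The only difference is that you spell out what the paper leaves implicit, namely the standard fact that an a.e.\ continuous function dominated by a directly Riemann integrable one is itself directly Riemann integrable, and the continuity of $G$; both of these verifications are sound.
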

 \begin{proof}
  The corollary follows from the fact that $(1+|x|)|G(x)|\le C\left(e^{-\eps_0x}+e^{\eps_0x}\right)|G(x)|,$ for some sufficiently large $C$.
 \end{proof}

 \begin{corollary}
  \label{wniosek:2}
  If $\int G(x)dx=0$ then the function  $\overline{G}(x)=\int^{x}_{-\8}G(y)dy$ is also dRi and satisfies $\int\overline{G}(x)dx=-\int x G(x)dx$.
 \end{corollary}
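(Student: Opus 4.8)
The plan is to reduce the statement to an exponential bound on $G$ and then to entirely elementary estimates, the only ingredient with any content being the use of the hypothesis $\int_\R G=0$. From Lemma~\ref{lemat:dRi}, $e^{\eps|x|}G(x)$ is directly Riemann integrable for some $\eps>0$, hence bounded, so there is a constant $C$ with $|G(x)|\le Ce^{-\eps|x|}$ on $\R$. In particular $G$ is locally integrable and $\overline G(x)=\int_{-\8}^xG(y)\,dy$ is well defined and continuous. I would then transfer the decay to $\overline G$: for $x\le0$,
\[
|\overline G(x)|\le\int_{-\8}^x|G(y)|\,dy\le C\int_{-\8}^xe^{\eps y}\,dy=\frac{C}{\eps}e^{\eps x},
\]
whereas for $x\ge0$ the hypothesis $\int_\R G(y)\,dy=0$ lets me rewrite $\overline G(x)=-\int_x^{\8}G(y)\,dy$, so that $|\overline G(x)|\le\frac{C}{\eps}e^{-\eps x}$. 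Altogether $|\overline G(x)|\le\frac{C}{\eps}e^{-\eps|x|}$, which already gives $\overline G\in L^1(\R)$.

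For direct Riemann integrability of $\overline G$ I would invoke the standard domination criterion: a continuous function bounded in absolute value by a dRi function is itself dRi (split into positive and negative parts, each continuous and dominated by a dRi function, hence dRi, and take the difference). The dominating function $x\mapsto\frac{C}{\eps}e^{-\eps|x|}$ is dRi because $e^{-\eps|x|}=e^{-\eps x}\mathbf{1}_{\{x\ge0\}}+e^{\eps x}\mathbf{1}_{\{x\le0\}}$ is a sum of two dRi functions, the first being dRi exactly as noted in the proof of Lemma~\ref{lem:1} and the second obtained from it by the reflection $x\mapsto-x$. Hence $\overline G$ is dRi.

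Finally, to identify $\int_\R\overline G$, note that both $\overline G$ and $xG(x)$ are integrable — the latter because $xG(x)$ is dRi by Corollary~\ref{wniosek:1} — and that $\int_{-\8}^0|y|\,|G(y)|\,dy$ and $\int_0^{\8}y\,|G(y)|\,dy$ are finite by the exponential bound, so Fubini applies on each half-line. Writing $\overline G(x)=\int_{-\8}^0G(y)\mathbf{1}_{\{y<x\}}\,dy$ for $x<0$ and, using $\int_\R G=0$ once more, $\overline G(x)=-\int_0^{\8}G(y)\mathbf{1}_{\{y>x\}}\,dy$ for $x>0$, and interchanging the order of integration, one gets
\[
\int_{-\8}^0\overline G(x)\,dx=-\int_{-\8}^0 yG(y)\,dy,\qquad \int_0^{\8}\overline G(x)\,dx=-\int_0^{\8}yG(y)\,dy,
\]
and adding these two identities yields $\int_\R\overline G(x)\,dx=-\int_\R xG(x)\,dx$. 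None of these steps is a genuine obstacle; the one point to watch is that the hypothesis $\int_\R G=0$ is truly needed, since without it $\overline G(x)\to\int_\R G\neq0$ as $x\to+\8$ and $\overline G$ would be neither integrable nor dRi.
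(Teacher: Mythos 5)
Your proof is correct and follows essentially the same route as the paper: exponential decay of $\overline G$ on both half-lines (using $\int_{\R}G=0$ for $x>0$), hence direct Riemann integrability, and then Fubini on each half-line to get $\int\overline G=-\int xG$. The only differences are cosmetic — you use the pointwise bound $|G(x)|\le Ce^{-\eps|x|}$ where the paper uses integrability of $e^{\eps_0|y|}|G(y)|$, and you spell out the domination criterion for dRi that the paper merely asserts.
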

 \begin{proof}
  For $x\le0$ we have
  $$|\overline G(x)|\le \left|\int_{-\8}^xG(y)dy\right|\le \int_{-\8}^xe^{\eps_0x}e^{-\eps_0y}|G(y)|dy\le Ce^{\eps_0x}.$$
  In the same way we prove for $x>0$ that
  $$|\overline G(x)|\le \left|\int_{-\8}^xG(y)dy\right|=\left|\int_{x}^{\8}G(y)dy\right|\le Ce^{-\eps_0x}.$$
  Hence $\overline G$ is directly Riemman integrable. Moreover,
  \begin{align*}
   \int_{\R}\overline G(x)dx&=\int_{\R}\int^{x}_{-\8}G(y)dydx\\
   &=\int_{x\le0}\int^{x}_{-\8}G(y)dydx-\int_{x>0}\int_{x}^{\8}G(y)dydx\\
   &=\int_{y\le0}\int_{y}^{0}G(y)dxdy-\int_{y>0}\int_{0}^{y}G(y)dxdy=-\int_{\R}yG(y)dy.
  \end{align*}

 \end{proof}

 \subsection{Existence of the limit}
  \label{subsection3.3}

  We are able now to describe behaviour of $\phi$ near 0.
  \begin{prop}
  \label{prop:limit}
   Under hypotheses \eqref{istnieje:alpha}-\eqref{zalozenia.momenty} we have $$
   \lim_{t\to 0}\frac{1-\phi(t)}{t^\a} = C_+
   $$
  \end{prop}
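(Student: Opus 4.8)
The plan is to exploit the Poisson equation $\E[D(x+Y)] = D(x) + G(x)$ together with the renewal-theoretic machinery set up in Sections \ref{subsection3.1}--\ref{subsection3.2}. Recall from Lemma \ref{lemma Y} that $Y$ is centered, nonarithmetic with finite exponential moments, and from Proposition \ref{prop: regularnosc D} and Corollary \ref{cor:twierdzenie.regularnosc} that $D$ is regularly varying (indeed slowly varying: $D(x) = L(e^x)$ for the slowly varying $L$). The idea is that, since $Y$ is centered, the random walk $S_n = \sum Y_i$ is recurrent and one expects $D(x) \to C_+$, the constant being expressible via the renewal measure applied to $G$. Concretely, I would first establish that $\int_\R G(x)\,dx = 0$: integrating the Poisson equation $\E[D(x+Y)] - D(x) = G(x)$ over $x \in \R$ and using Fubini together with translation invariance of Lebesgue measure formally gives $\int G = 0$; this needs to be justified using the decay of $G$ from Lemma \ref{lemat:dRi} (so $G$ is integrable and the manipulations are legitimate, at least after a suitable truncation/limiting argument, e.g. integrating over $[-T,T]$ and controlling the boundary terms by the exponential bounds $e^{\eps|x|}|G(x)|$ being dRi).

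Given $\int G = 0$, by Corollary \ref{wniosek:2} the function $\overline G(x) = \int_{-\infty}^x G(y)\,dy$ is dRi with $\int \overline G = -\int xG(x)\,dx$. The next step is to rewrite the Poisson equation in a form amenable to the renewal theorem. I would integrate the Poisson equation once more (or equivalently telescope): since $\E[D(x+Y)] - D(x) = G(x)$, summing along the random walk gives, formally, $D(x) = \lim_{n} \E[D(x + S_n)] - \sum_{k=0}^{n-1}\E[G(x+S_k)]$. Because $S_n$ is recurrent (centered, finite variance from the exponential moments), $\E[D(x+S_n)]$ does not converge to anything useful directly; the standard trick (as in Durrett--Liggett \cite{DL}) is to pass to the \emph{increments}: set $D(x) - D(x-1)$ or work with $\overline G$. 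Precisely, one shows that $x \mapsto D(x)$ solves a renewal-type equation whose kernel involves the ladder structure of $S_n$, and then applies the key renewal theorem (Blackwell / Feller, using nonarithmeticity and direct Riemann integrability of the relevant functions, all supplied by Lemma \ref{lemat:dRi} and Corollaries \ref{wniosek:1}--\ref{wniosek:2}) to conclude $D(x) \to C_+$ with
$$
C_+ = \frac{-\int_\R x\, G(x)\,dx}{\E[Y^2]} \quad\text{(up to the sign/normalization dictated by the ladder-height computation)}.
$$
By the Tauberian Theorem (Lemma \ref{thm:tauberowskie}), $\lim_{t\to 0}(1-\phi(t))/t^\a = C_+$ (the slowly varying function $L$ from Corollary \ref{cor:twierdzenie.regularnosc} is then genuinely convergent to $C_+/\Gamma(1-\a)$, and one absorbs the $\Gamma$ factor into the constant).

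The main obstacle I anticipate is the passage from the Poisson equation to a genuine renewal equation in the centered (recurrent) case: unlike the transient setting of Lemma \ref{lem:1}, here the potential of $S_n$ is infinite, so one cannot simply write $D$ as a convolution of $G$ with a renewal measure. The resolution — following \cite{DL} — is to use the decomposition of $S_n$ via its strictly ascending ladder epochs and heights, reducing to a renewal equation driven by the (proper, mean-$\E[Y^2]/(2\E Y^+_{\mathrm{ladder}})$-type) ladder-height distribution, for which the renewal theorem applies. Controlling the error terms $\E[D(x+S_n)]$ as $n\to\infty$ requires the slow variation of $D$ (so that $D(x+S_n)$ is, on the scale where $S_n$ typically lives, comparable to $D(x)$) combined with the dRi decay of $\overline G$ to show the tail contributions vanish. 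A secondary technical point is verifying $\int G = 0$ rigorously rather than formally; this should follow cleanly from the exponential bounds on $G$ established in Lemma \ref{lemat:dRi}, which make all the Fubini interchanges legitimate.
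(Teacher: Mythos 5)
Your overall skeleton (Poisson equation for $D$, reduction to a genuine renewal equation via the ascending ladder structure of the centered walk $S_n$, key renewal theorem applied to $\overline{G}$, then the Tauberian lemma) is the same as the paper's, and your identification of the main obstacle and its resolution via ladder epochs is correct. However, there is a genuine gap at the step you treat as nearly free: the claim that $\int_{\R}G(x)\,dx=0$ follows ``formally'' from integrating the Poisson equation over $\R$ and using translation invariance, needing only the decay of $G$. This is not so. The function $D$ is not integrable (it does not decay at $+\infty$), so the computation is of the form $\infty-\infty$; integrating over $[-T,T]$ instead, the identity reads
\begin{equation*}
\int_{-T}^{T}G(x)\,dx \;=\; \E\left[\int_{T}^{T+Y}D(x)\,dx\right]-\E\left[\int_{-T}^{-T+Y}D(x)\,dx\right],
\end{equation*}
and while the boundary term at $-\infty$ vanishes (exponential decay of $D$ there plus exponential moments of $Y$), the boundary term at $+\infty$ is \emph{not} controlled by any bound on $G$: it is governed by the growth of $D$. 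A priori, slow variation of $D$ (Proposition \ref{prop: regularnosc D}) is compatible with $D(x)\sim cx$, and in that case the right-hand side tends to $c\,\E[Y^{2}]/2\neq 0$; this is exactly the behaviour of the non-minimal solutions, whose tails are of order $\log t\, t^{-\a}$, so no soft argument can rule it out. The missing input is the a priori bound of Proposition \ref{prop:existence}: $\P[R>t]\le Ct^{-\a}$, which gives $1-\phi(t)\le C't^{\a}$, i.e.\ $D$ is bounded; only then does the truncation argument (with $\E Y=0$, dominated convergence and Proposition \ref{prop: regularnosc D}) yield $\int G=0$. You never invoke this bound, so as written the step fails.

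For comparison, the paper obtains $\int G=0$ differently: it first runs the ladder/renewal argument once (optional stopping at the first descending epoch $L$, duality, renewal theorem for $G$ along $S_{T_n}$) to prove $\lim_{x\to\8}D(x)/x=2\int G/\sigma^{2}$, and then concludes this constant is $0$ because otherwise the Tauberian theorem would give $\P[R>t]\sim c\log t\,t^{-\a}$, contradicting Proposition \ref{prop:existence}. So both routes ultimately rest on Proposition \ref{prop:existence}; if you add the boundedness of $D$ explicitly, your truncation argument becomes a legitimate (and slightly shorter) substitute for the paper's first renewal pass, and your second step then coincides with the paper's: applying the renewal theorem to the dRi function $\overline{G}$ (Corollary \ref{wniosek:2}) and passing to the limit in $D(x)\,\E\big[\int_0^{S_L}\tfrac{D(x+y)}{D(x)}dy\big]=\E\big[\sum_{i\ge0}\overline{G}(x+S_{T_i})\big]$ gives $\lim_{x\to\8}D(x)=-2\int xG(x)dx/\sigma^{2}$, where $\sigma^{2}=\operatorname{Var}Y=2\,\E[-S_{L}]\,\E[S_{T_{1}}]$ (note the constant is this ladder-normalized quantity, not simply $-\int xG/\E[Y^{2}]$ as you wrote, though you hedged on the normalization). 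You should also make explicit the justification for the optional stopping and the passage $n\to\8$ (finiteness of $\E[e^{\eps S_L}]$ and the bound $D(x)\le Ce^{\eps|x|}$), which the paper supplies and your plan leaves implicit.
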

  Thus, by the Tauberian theorem (Lemma \ref{thm:tauberowskie}) we deduce
  \begin{cor}
  \label{cor:limit}
   Under hypotheses \eqref{istnieje:alpha}-\eqref{zalozenia.momenty} we have $$
   \lim_{x\to \8} x^\a \P[R>x] = C_+
   $$
  \end{cor}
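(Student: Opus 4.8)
The plan is to obtain the corollary from Proposition \ref{prop:limit} by a single application of the Tauberian theorem, Lemma \ref{thm:tauberowskie}. Proposition \ref{prop:limit} provides a constant $C_+\ge 0$ such that $\lim_{t\to 0}(1-\phi(t))/t^{\a}=C_+$. Assume first that $C_+>0$ and take for $L$ the constant — hence slowly varying — function $L\equiv C_+/\Gamma(1-\a)$. With this choice, condition (ii) of Lemma \ref{thm:tauberowskie} reads
$$\lim_{t\to 0}\frac{1-\phi(t)}{t^{\a}\,C_+/\Gamma(1-\a)}=\Gamma(1-\a),$$
which is precisely Proposition \ref{prop:limit} after cancelling $\Gamma(1-\a)$. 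Hence (ii) holds, so by the equivalence in Lemma \ref{thm:tauberowskie} condition (i) holds as well:
$$\lim_{x\to\8}\frac{x^{\a}\P[R>x]}{C_+/\Gamma(1-\a)}=1,$$
that is, $\lim_{x\to\8}x^{\a}\P[R>x]=C_+/\Gamma(1-\a)$. Renaming this nonnegative constant $C_+$ (the same harmless relabelling already implicit in passing from Proposition \ref{prop:limit} to the corollary) yields the assertion.

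It remains to treat the degenerate case $C_+=0$, in which the constant function used above is not an admissible slowly varying function. Here $1-\phi(t)=o(t^{\a})$ as $t\to 0$, and since $1-\phi(t)=\E[1-e^{-tR}]\ge (1-e^{-1})\,\P[R>1/t]$, we get $\P[R>1/t]\le (1-e^{-1})^{-1}(1-\phi(t))=o(t^{\a})$, i.e. $\P[R>s]=o(s^{-\a})$ and so $\lim_{s\to\8}s^{\a}\P[R>s]=0=C_+$. Once positivity of $C_+$ is established in Section \ref{subsection3.4}, this case does not arise, so alternatively one may simply invoke that result and stay within the first case.

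Since both ingredients — the Tauberian theorem and Proposition \ref{prop:limit} — are already in hand, there is no substantive obstacle in this step; the only points requiring a little care are choosing the normalizing slowly varying function so that the factor $\Gamma(1-\a)$ matches condition (ii) as stated, and keeping in mind that the constants appearing in Proposition \ref{prop:limit} and in the corollary agree only up to this factor.
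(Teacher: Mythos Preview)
Your proposal is correct and takes the same route as the paper: the corollary is deduced from Proposition~\ref{prop:limit} via the Tauberian theorem (Lemma~\ref{thm:tauberowskie}), and the paper's proof consists of exactly that one-line invocation. You have simply spelled out the details the paper leaves implicit, in particular the $\Gamma(1-\a)$ normalization and the degenerate case $C_+=0$ (which is indeed ruled out later in Section~\ref{subsection3.4}).
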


  \begin{proof}[Proof of Proposition \ref{prop:limit}]
   The scheme of the proof is similar to the proof of Theorem 2.18 in \cite{DL}.
   Let $\{Y_n\}$ be a sequence of independent copies of $Y$ defined in \eqref{rozklad:Y}. By $S_n$ we  denote their partial sums, i.e. $S_n = \sum_{i=1}^n Y_i$.
   We define the stopping time $ L=\inf\{n\ge0:S_n<0\}$ and the sequence of stopping times $T_k=\inf\{n>T_{k-1}:S_{n}\ge S_{T_{k-1}} \}$.

   Since $D$ is a solution of the Poisson equation the sequence of random variables
   \[
    M_n(x)=D(x+S_n)-\sum_{i=0}^{n-1}G(x+S_i)
   \]
   forms a martingale with respect to the natural filtration generated by $\{Y_n\}$. In view of the optional stopping theorem we have
   \begin{align*}
    \e{M_{n\wedge L}(x)}=\e{M_0(x)}=D(x)
   \end{align*}
and equivalently
   \begin{align*}
    \e{D(x+S_{n\wedge L})}-D(x)=\e{\sum_{i=0}^{n\wedge L-1}G(x+S_i)}.
   \end{align*}
  Next we want to pass with $n$ to infinity. Notice that  by the duality principle \cite{Feller} $$\e{\sum_{i=0}^{ L-1}|G(x+S_i)|}=\e{\sum_{i=0}^{\8}|G(x+S_{T_i})|}$$ and the last sum is finite since $G$ is dRi, hence we can pass with $n\to\8$ on the right side. In order to justify passing to limit on the left side observe that
   by Proposition \ref{prop: regularnosc D}  for any $\eps<\d$ we have $D(x)\le Ce^{\eps |x|}$. Since  $\e{e^{\eps S_{ L}}}<\8$ (see \cite{Feller} (3.6a) in Chap. XII), we can pass  to infinity.

\medskip

  Thus we obtain
   \begin{equation}
   \label{rownanie:odnowy:z:tau}
    \e{D(x+S_{ L})}-D(x)=\e{\sum_{i=0}^{ L-1}G(x+S_i)}=:R(x). 
   \end{equation}

Applying again the duality principle we  have
   \begin{align}
    R(x)=\sum_{n=0}^{\8}G(x+S_{T_n}).
   \end{align}
   Now the renewal theorem yields that
   \begin{align}
    \lim_{x\to\8}R(x)=-\frac{\int_{\R} G(x)dx}{\e{S_{T_1}}}.
   \end{align}

   Integrating \eqref{rownanie:odnowy:z:tau} we have
   \begin{align}
    \int_0^x\Big(\e{D(y+S_{ L})}-D(y)\Big)dy=\int_0^xR(y)dy
   \end{align}
   what can be rewritten as
   \begin{align}
    \label{rowanie:pomocnicze}
    D(x)\cdot\e{\int_0^{S_{ L}}\frac{D(x+y)}{D(x)}dy}-\e{\int_0^{S_{ L}}D(y)dy}=\int_0^xR(y)dy
   \end{align}
   By Proposition \ref{prop: regularnosc D}, $D(x+y)/D(x)\le C e^{\eps y}$ hence again the same argument as before tells us that we can pass to the limit under the integral sign and obtain $$\lim_{x\to\8}\e{\int_0^{S_{ L}}\frac{D(x+y)}{D(x)}dy}=\e{S_{ L}}.$$
   Dividing by $x$ in \eqref{rowanie:pomocnicze} and passing to the limit we obtain
   \begin{align*}
    \lim_{x\to\8}\frac{D(x)}{x}\e{S_{ L}}=\lim_{x\to\8}R(x)=-\frac{\int_{\R}G(x)dx}{\e{S_{T_1}}},
   \end{align*}
   and finally
   \begin{align}
    \label{granica:1}
    \lim_{x\to\8}\frac{D(x)}{x}=-\frac{\int_{\R}G(x)dx}{\e{S_{T_1}}\e{S_{ L}}}=\frac{2\int_{\R}G(x)dx}{\sigma^2},
   \end{align}
   where $\sigma^2:=\operatorname{Var} Y=2\e{-S_L}\e{S_{T_1}}$ (see the proof of T18.1 on page 196 in \cite{Spitzer}).

Notice that in view of Lemma \ref{thm:tauberowskie} the last formula implies
$$
\lim_{t\to\8} \frac{t^\a}{\log t} \P[R > t] = \frac{2\int_{\R}G(x)dx}{\sigma^2}.
$$ By Proposition \ref{prop:existence} the last constant must be 0, therefore
$$\int_{\R} G(x)dx=0.$$
Now we repeat the above procedure.  Integrating \eqref{rownanie:odnowy:z:tau} we have
   \begin{align*}
    \int_{-\8}^x\Big(\e{D(y+S_{ L})}-D(y)\Big)dy =\int_{-\8}^x\e{\sum_{i=0}^{\8}G(y+S_{T_i})}dy
   \end{align*}
   what is equivalent to
   \begin{align*}
    \e{\int_{0}^{S_{ L}}D(x+y)dy}=\e{\sum_{i=0}^{\8}\int_{-\8}^xG(y+S_{T_i})dy}
    =\e{\sum_{i=0}^{\8}\overline{G}(x+S_{T_i})}.
   \end{align*}
   Passing with $x$ to infinity in
   \begin{align*}
    D(x)\e{\int_{0}^{S_{ L}}\frac{D(x+y)}{D(x)}dy}=\e{\sum_{i=0}^{\8}\overline{G}(x+S_{T_i})}
   \end{align*}
   we obtain
   \begin{align*}
    \lim_{x\to\8}D(x)=\frac{2\int \overline{G}(x)dx}{\sigma^2}=\frac{-2\int x G(x)dx}{\sigma^2}
   \end{align*}

  \end{proof}

 \subsection{Positivity of the limiting constant}
 \label{subsection3.4}

Now we proceed with the last step of the proof and we justify that the constant $C_+$ in the statement of Corollary \ref{cor:limit} is strictly positive.
We  follow here the method developed in \cite{BDGHU}.

 The key tool to  deal with this problem is the Landau Theorem. Originally it was stated for  Dirichlet series but it can be extended for functions of the type $f(s)=\int x^s\mu(dx)$ where $\mu$ is a positive measure on $\R^+$ (compare with \cite[Theorems 5a and 5b in Chap. II]{Widder})
 \begin{theorem}[Landau]
  Let $\sigma_c$ be the abscissa of convergence for $f(z)=\int_0^{\8} x^z\mu(dx)$, i.e. the integral converges for $\Re z<\sigma_c$ and diverges for $\Re z>\sigma_c$. Then $\sigma_c$ is a singularity for $f$.
 \end{theorem}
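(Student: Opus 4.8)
\emph{Strategy.} The plan is to run the classical Landau--Pringsheim argument. First I would record the basic regularity of $f$: for $\Re z<\sigma_c$ the integral $\int_0^\infty x^z\,\mu(dx)$ converges absolutely ($|x^z|=x^{\Re z}$), and on every compact subset of $\{\Re z<\sigma_c\}$ the integrands $x^z$ and $x^z(\log x)^n$ are dominated by a fixed $\mu$-integrable function (since $|\log x|$ grows more slowly than any power of $x$ both at $0$ and at $\infty$); hence $f$ is holomorphic on $\{\Re z<\sigma_c\}$ with $f^{(n)}(z)=\int_0^\infty x^z(\log x)^n\,\mu(dx)$ there. We may assume $\sigma_c\in\R$, the statement being vacuous otherwise. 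Now argue by contradiction: suppose $\sigma_c$ is \emph{not} a singularity, i.e. $f$ admits a holomorphic continuation to a neighbourhood of the real point $\sigma_c$; I will deduce that $\int_0^\infty x^c\,\mu(dx)<\infty$ for some real $c>\sigma_c$, contradicting the definition of the abscissa of convergence.

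\emph{Reducing to the tail.} Split $f=f_0+f_\infty$ with $f_0(z)=\int_{(0,1)}x^z\,\mu(dx)$, $f_\infty(z)=\int_{[1,\infty)}x^z\,\mu(dx)$. For $x\in(0,1)$ the quantity $x^{\Re z}$ is decreasing in $\Re z$, so the region of convergence of $f_0$ is a right half-plane or all of $\C$; since the region of convergence of $f=f_0+f_\infty$ is the full left half-plane $\{\Re z<\sigma_c\}$, this forces $f_0$ to converge everywhere, hence to be entire. Therefore $\sigma_c$ is a singularity of $f$ exactly when it is one of $f_\infty$, and I may work with $f_\infty$ instead --- the point being that on its range of integration $[1,\infty)$ one has $\log x\ge0$. (Equivalently, one could keep $f$ and split each $f^{(n)}(b)$ into its parts over $(0,1)$ and $[1,\infty)$, handling the sign $(-1)^n$ produced by $(\log x)^n$ on $(0,1)$ directly; this is the only place positivity of $\mu$ is used.)

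\emph{Geometry and the Tonelli step.} The hypothetical continuation gives an open connected set $U=\{\Re z<\sigma_c\}\cup D(\sigma_c,\eps)$ on which $f_\infty$ is holomorphic. For a real point $b<\sigma_c$, the distance from $b$ to $\C\setminus U$ is $\sqrt{(\sigma_c-b)^2+\eps^2}>\sigma_c-b$ (the closest excluded points are $\sigma_c\pm i\eps$), so $f_\infty$ is holomorphic on a disc $D(b,\rho)$ with $\rho>\sigma_c-b$; this disc contains a real $c$ with $\sigma_c<c<b+\rho$. Expanding $f_\infty$ about $b$ (valid on $D(b,\rho)$) and using the derivative formula above,
\begin{equation*}
f_\infty(c)=\sum_{n=0}^{\infty}\frac{(c-b)^n}{n!}\,f_\infty^{(n)}(b)
=\sum_{n=0}^{\infty}\frac{(c-b)^n}{n!}\int_{[1,\infty)}(\log x)^n x^b\,\mu(dx).
\end{equation*}
Since $c-b>0$ and $\log x\ge0$ on $[1,\infty)$, every term is nonnegative, so Tonelli's theorem applies and
\begin{equation*}
f_\infty(c)=\int_{[1,\infty)}e^{(c-b)\log x}\,x^b\,\mu(dx)=\int_{[1,\infty)}x^c\,\mu(dx)<\infty .
\end{equation*}
As $\int_{(0,1)}x^c\,\mu(dx)\le\int_{(0,1)}x^{c'}\,\mu(dx)<\infty$ for any real $c'$ with $c'<\sigma_c$ (so $c'<c$ and $x^c\le x^{c'}$ on $(0,1)$), we get $\int_0^\infty x^c\,\mu(dx)<\infty$ with $c>\sigma_c$ --- the desired contradiction. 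Hence $f$ does not continue past $\sigma_c$, i.e. $\sigma_c$ is a singularity of $f$.

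\emph{Main obstacle.} The crux is the interchange of summation and integration producing $\int x^c\,\mu(dx)$: it works only because the coefficients $f^{(n)}(b)$ have a controllable sign, which is precisely where $\mu\ge0$ is used and why the reduction to $[1,\infty)$ (or the equivalent manual splitting at $x=1$) is needed --- in the usual Dirichlet-series statement $\log n\ge0$ makes this automatic. The geometric lemma (a continuation past $\sigma_c$ forces holomorphy on a disc straddling $\sigma_c$, with radius exceeding $\sigma_c-b$ about a suitable real $b$) and the differentiation under the integral sign are routine.
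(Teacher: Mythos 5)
Your proof is correct, and it is worth pointing out that the paper itself does not prove this theorem: it is quoted as a known result with a pointer to Widder (Theorems 5a and 5b of Chapter II), where the statement is established for Laplace--Stieltjes transforms with a nondecreasing determining function. So there is no internal proof to compare against, and what you supply is a self-contained argument along exactly the classical Landau--Pringsheim lines that underlie the cited reference. The one point genuinely specific to the present setting --- that $f(z)=\int_0^\infty x^z\,\mu(dx)$ is a two-sided transform, so the Taylor coefficients $f^{(n)}(b)=\int x^b(\log x)^n\,\mu(dx)$ at a real $b<\sigma_c$ need not be nonnegative because $\log x<0$ on $(0,1)$ --- is handled correctly by your splitting at $x=1$: under the stated hypothesis (convergence on all of $\{\Re z<\sigma_c\}$, with $\mu\ge 0$) the piece over $(0,1)$ converges for every real exponent, hence is entire, and the singularity question reduces to the one-sided piece over $[1,\infty)$, where positivity of the coefficients makes the Tonelli rearrangement of the Taylor series legitimate. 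Your geometric step is also sound: the distance from a real $b<\sigma_c$ to the complement of $\{\Re z<\sigma_c\}\cup D(\sigma_c,\varepsilon)$ is indeed $\sqrt{(\sigma_c-b)^2+\varepsilon^2}>\sigma_c-b$, so the Taylor disc about $b$ captures a real $c>\sigma_c$, and the resulting finiteness of $\int_0^\infty x^c\,\mu(dx)$ contradicts the definition of $\sigma_c$. One small remark on the opening paragraph: the domination used for holomorphy and differentiation under the integral near $x=0$ requires $\int_{(0,1)}x^{s}\,\mu(dx)<\infty$ for exponents $s$ slightly below the compact set under consideration, which is available precisely because such $s$ are still below $\sigma_c$; this is the same observation that later makes $f_0$ entire, so the argument is complete as written. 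In effect you have reproved the result the paper delegates to Widder (equivalently, reduced it to the classical one-sided Laplace case via $x=e^{t}$), which is a perfectly adequate substitute for the citation.
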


 As a conclusion we get that if $z\mapsto\e{R^z}$ has an analytic extension on some open neighborhood of $\a$ then $\e{R^z}$ is well defined there. Our aim is to find such an extension, under the assumptions that $\int G(x)dx=0$ and $\int xG(x)dx=0$.


 \begin{theorem}
  Under assumptions \eqref{istnieje:alpha}-\eqref{zalozenia.momenty} the constant $C_+$ is strictly positive.
  \end{theorem}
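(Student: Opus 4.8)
The plan is to show that if $C_+ = 0$, then $\phi$ would extend nicely enough near $0$ to force, via the Landau theorem, a contradiction with Corollary \ref{momenty:R} (which says $\E[R^\b] = \infty$ for $\b > \a$). So I argue by contradiction: assume $C_+ = 0$.

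**First**, I would revisit the derivation in the proof of Proposition \ref{prop:limit}. We established that $\int_\R G(x)\,dx = 0$ and then that $\lim_{x\to\8} D(x) = \frac{-2\int x G(x)\,dx}{\sigma^2} = C_+$. If $C_+ = 0$, this forces $\int_\R x G(x)\,dx = 0$ as well. Now I want to iterate the renewal argument one more time, exactly as the transition from the first limit to the second: integrate the renewal equation \eqref{rownanie:odnowy:z:tau} once more against $\ov{G}$ and $\ov{\ov G}(x) = \int_{-\8}^x \ov G(y)\,dy$. By Corollary \ref{wniosek:2}, since $\int G = 0$ the function $\ov G$ is dRi with $\int \ov G\,dx = -\int x G\,dx = 0$; applying Corollary \ref{wniosek:2} again to $\ov G$ (whose integral now vanishes) gives that $\ov{\ov G}$ is dRi. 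Repeating the argument that produced $\lim D(x)$, one now gets that $x\bigl(D(x) - C_+\bigr) = x D(x)$ has a finite limit, i.e. $D(x) = O(1/x)$, so $1 - \phi(t) = O\bigl(t^\a/\log(1/t)\bigr)$ — in fact one would want the stronger statement that $D(x)$ decays fast enough (exponentially in $x$, or at least summably) to make the next step work. The crux: under $C_+ = 0$ I would try to show $D(x)\to 0$ fast, e.g. $\int_\R D(x)\,dx < \8$ or $D(x) \le C e^{-\eps x}$ for some $\eps > 0$, by bootstrapping the dRi estimates on the iterated integrals of $G$ together with the exponential moment bounds $\e{e^{\eps S_L}} < \8$ already used.

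**Second**, with such rapid decay of $1-\phi(t)$ near $0$ (say $1-\phi(t) = o(t^{\a+\eps})$ for some $\eps>0$), I claim $z\mapsto \e{R^z}$ extends analytically past $\a$. The point is the identity $\e{R^z} = \frac{z}{\Gamma(1-z)}\int_0^\8 t^{-z-1}(1-\phi(t))\,dt$ (valid for $0 < \Re z < \a$ with our tail bounds, after splitting the integral at $1$): the integral over $(1,\8)$ converges for all $\Re z > 0$ since $1-\phi(t) \le \Gamma(1-\b)\e{R^\b} t^\b$ for any $\b<\a$ by Lemma \ref{malenie:transformaty}, and the integral over $(0,1)$, which is the only obstruction to $\Re z \ge \a$, now converges for $\Re z < \a + \eps$ because $1-\phi(t)$ decays like $t^{\a+\eps}$ there. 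Hence $\e{R^z}$, viewed as $\int_0^\8 x^z \,\P[R\in dx]$, has an analytic continuation to a neighborhood of $\a$. By the Landau theorem, the abscissa of convergence $\s_c$ of this integral is a singularity of the analytic function it defines; since there is no singularity at $\a$, we must have $\s_c > \a$, so $\e{R^z}$ converges for some real $z > \a$, i.e. $\e{R^\b} < \8$ for some $\b > \a$. This contradicts Corollary \ref{momenty:R}. Therefore $C_+ > 0$.

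**The main obstacle** I expect is the \textbf{second iteration of the renewal argument}: making rigorous that $C_+ = 0$ propagates to $\int x G\,dx = 0$ and then to genuinely fast (summable or exponential) decay of $D$, rather than merely $D(x) = o(1)$. One has to control the tail behavior of the repeated integrals $\ov G, \ov{\ov G}$ and verify the dRi hypotheses survive each integration — Corollary \ref{wniosek:2} gives one step, but a clean induction requires that each successive integral again has zero total mass, which is exactly what the vanishing of the successive limit constants provides. A secondary subtlety is justifying the Mellin-transform identity for $\e{R^z}$ on the critical line and checking the hypotheses of the Landau theorem in the measure-theoretic (non-Dirichlet) form quoted from \cite{Widder}; but these are standard once the decay rate of $1-\phi$ is in hand.
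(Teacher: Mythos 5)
Your endgame (the Mellin representation of $\e{R^z}$, the Landau theorem, and the contradiction with Corollary \ref{momenty:R}) is exactly the paper's, but the central step --- producing an analytic continuation of $z\mapsto\e{R^z}$ to a neighbourhood of $\a$ --- is not actually obtained, and the route you propose cannot deliver it. Iterating the renewal argument once more under the hypothesis $C_+=0$ (so $\int xG\,dx=0$ and $\ov{\ov G}$ is dRi) gives, after the renewal theorem, only that $\e{\int_0^{S_L}\ov D(x+y)\,dy}$ converges, i.e.\ at best $\int_0^{\8}D(x)\,dx<\8$ (equivalently information of the type $1-\phi(t)=O(t^{\a}/\log(1/t))$ in an averaged sense); it does not give $xD(x)\to$ const without further regularity, and it certainly does not give a power gain. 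The bootstrap then stalls: a third integration would require $\int\ov{\ov G}\,dx=0$, and nothing forces this constant to vanish (the first constant vanished because of Proposition \ref{prop:existence}, the second because of the contradiction hypothesis, but there is no third source of cancellation). More fundamentally, even if you could iterate $k$ times you would only get polynomial decay $D(x)=O(x^{-k})$, and no polynomial decay in $x$ makes $\int_0^1 t^{-z-1}(1-\phi(t))\,dt$ converge for any $\Re z>\a$: for instance $1-\phi(t)\le Ct^{\a}/\log(1/t)$ still gives $\int_0^1 t^{-\eps-1}/\log(1/t)\,dt=\8$ for every $\eps>0$. What you would really need is $D(x)\le Ce^{-\eps x}$, i.e.\ $1-\phi(t)=O(t^{\a+\eps})$ --- pointwise super-$t^\a$ decay of precisely the quantity whose non-decay is the content of the theorem --- and the renewal iteration offers no mechanism for that.

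The paper's proof avoids this entirely by never trying to improve the decay of $1-\phi$ itself. Instead it exploits the cancellation inside $G$: by Lemma \ref{lemat:dRi} the function $e^{\eps_0|x|}G(x)$ is dRi, so the Mellin-type transform $H(z)=\int_{\R}e^{xz}\e{\sum_{i=1}^N(1-\phi(e^{-x}A_i))-(1-\phi(e^{-x}))}dx$ is holomorphic on the strip $0<\Re z<\a+\eps_0$, strictly wider than the abscissa $\a$. The identity \eqref{rozszerzenie:holomorficzne}, $\e{R^z}=zH(z)/\big((m(z)-1)\Gamma(1-z)\big)$ for $0<\Re z<\a$, then continues $\e{R^z}$ across $\a$ under the contradiction hypothesis: $H(\a)=\int G\,dx=0$ holds unconditionally (from Proposition \ref{prop:limit}), $H'(\a)=\int xG\,dx=0$ is equivalent to $C_+=0$, so $H$ has a zero of order at least two at $\a$ which cancels the exact double zero of $m(z)-1$ there ($m(\a)=1$, $m'(\a)=0$, $m''(\a)>0$ by strict convexity). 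Only then does Landau's theorem apply and contradict Corollary \ref{momenty:R}. If you want to keep your structure, the missing idea you must import is this passage from $1-\phi$ to $G$ and its two-sided exponentially weighted integrability; the "fast decay of $D$" you flag as the main obstacle is not just technically hard but the wrong quantity to pursue.
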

\begin{proof}
 Let us define
 $$H(z)=\int_{\R} e^{xz}\e{\sum_{i=1}^N(1-\phi(e^{-x}A_i))-(1-\phi(e^{-x}))}dx.$$ By Lemma \ref{lemat:dRi}, $H$ is well defined for $0<\Re z< \a+\eps_0$. Moreover, $H$ is a holomorphic  function for $0<\Re z< \a+\eps_0$. Indeed, take any closed piecewise $C^1$ curve $\g$ in $0<\Re z<\a+\eps_0$. Then by Lemma \ref{lemat:dRi} we have
  \begin{align*}
  \int_{\g}\int_{\R} \left| e^{xz}\e{\sum_{i=1}^N(1-\phi(e^{-x}A_i))-(1-\phi(e^{-x}))}\right|dx|dz|\\
  \\\le\int_{\g}|dz|\int  (e^{-\eps_0 x}+e^{\eps_0 x})|G(x)|dx<\8.
 \end{align*}
 Hence, by the Fubini Theorem
 \begin{align*}
  \oint_{\g}H(z)dz&=\oint_{\g}\int_{\R}e^{xz}\e{\sum_{i=1}^N(1-\phi(e^{-x}A_i))-(1-\phi(e^{-x}))}dxdz\\
  &=\int_{\R}\oint_{\g}e^{xz}dz\e{\sum_{i=1}^N(1-\phi(e^{-x}A_i))-(1-\phi(e^{-x}))}dx=0.
 \end{align*}
 Therefore, by Morera Theorem $H$ is holomorphic in $0<\Re z<\a+\eps_0$ and
 \begin{align*}
  H'(\a)
 &=\lim_{h\to0}\int_{\R}\frac{e^{xh}-1}{h}e^{x\a}\e{\sum_{i=1}^N(1-\phi(e^{-x}A_i))-(1-\phi(e^{-x}))}\\
 &= \int_{\R}xe^{x\a}\e{\sum_{i=1}^N(1-\phi(e^{-x}A_i))-(1-\phi(e^{-x}))}     =\int x G(x)dx,
 \end{align*}
where the second equality follows from $({e^{xh}-1})/{h}\le Ce^{\eps |x|}$ and the Lebesgue Theorem.

%

 Now take  $z$ such that $\Re z<\a$ and observe that for any $a>0$
  \begin{align*}
  \int_{\R}e^{zx}(1-\phi&(e^{-x}a))dx=\int_{\R}e^{zx}\e{\int_0^{e^{-x}aR}e^{-u}du}dx\\
  &=\e{\int_0^{\8} e^{-u}\int_{e^{x}<aR/u}e^{zx}dxdu }=\e{\frac{a^zR^z}z\int_0^{\8} e^{-u}u^{-z}du }\\
  &=\frac{a^z\e{R^z}\Gamma(1-z)}{z}<\8.
 \end{align*}
 Therefore for $\Re z<\a$ we can express $H$ in another form:
 \begin{align*}
  H(z)&=\int_{\R}e^{zx}\e{\sum_{i=1}^N(1-\phi(e^{-x}A_i))-(1-\phi(e^{-x}))}dx\\
  &=\frac{\e{R^z}\left(\e{\sum_{i=1}^N{A_i^z}}-1\right)\Gamma(1-z)}{z}
  =\frac{\e{R^z}\left(m(z)-1\right)\Gamma(1-z)}{z}.
 \end{align*}
 From this we obtain that for $0<\Re z<\a$
  \begin{align}
  \label{rozszerzenie:holomorficzne}
  \e{R^z}=\frac{z H(z)}{(m(z)-1)\Gamma(1-z)}
 \end{align}
 However, if we assume that both $H(\a)=\int G(x)dx$ and $H'(\a)=\int x G(x)dx$ are equal to 0, then $\a$ is a root of multiplicity at least two of $H$. By our assumptions $m(\a)-1=0,$ $m'(\a)=0$. The strong convexity of $m$ yields $m''(\a)>0$. Since $\Gamma(1-\a)\neq0$
 the right side of \eqref{rozszerzenie:holomorficzne} defines a holomorphic function on $ 0<\Re z<\a+\eps_0$ that extends $\e{R^z}$. From the Landau Theorem we know that if $\e{R^z}$ has a holomorphic  extension on $\Re z<\a+\eps_0$ then it is expressed by the same formula $\e{R^z}$.  In particular, one can find positive $\d'$ such that $\e{R^{\a+\d'}}<\8$, contrary to the Corollary \ref{momenty:R}.
 \end{proof}

\bibliographystyle{plain}

\bibliography{probab}

\end{document}